\documentclass[12pt]{amsart}
\usepackage{a4wide,amsmath,enumerate,xcolor,graphicx}
\usepackage{esint} 
\allowdisplaybreaks

\let\pa\partial
\let\na\nabla
\let\eps\varepsilon

\newcommand{\R}{{\mathbb R}}
\newcommand{\diver}{\operatorname{div}}

\newcommand{\dom}{\mathcal{D}}
\newcommand{\dd}{{\mathrm{d}}}
\newcommand{\dx}{{\mathrm{d}x}}

\newtheorem{theorem}{Theorem}
\newtheorem{lemma}[theorem]{Lemma}
\newtheorem{proposition}[theorem]{Proposition}
\newtheorem{remark}[theorem]{Remark}

\begin{document}

\title[Long-time asymptotics for cross-diffusion systems]{Large-time asymptotics for general cross-diffusion systems and novel convex Sobolev inequalities}

\author[N. Geltner]{Noah Geltner}
\address{Institute of Analysis and Scientific Computing, TU Wien, Wiedner Hauptstra\ss e 8--10, 1040 Wien, Austria}
\email{noah.geltner@tuwien.ac.at} 

\author[A. J\"ungel]{Ansgar J\"ungel}
\address{Institute of Analysis and Scientific Computing, TU Wien, Wiedner Hauptstra\ss e 8--10, 1040 Wien, Austria}
\email{juengel@tuwien.ac.at} 

\date{\today}

\thanks{The authors acknowledge partial support from the Austrian Science Fund (FWF), grant 10.55776/PAT2687825, and from the Austrian Federal Ministry for Women, Science and Research and implemented by \"OAD, project MULT09/2025. This work has received funding from the European Research Council (ERC) under the European Union's Horizon 2020 research and innovation programme, ERC Advanced Grant NEUROMORPH, no.~101018153. For open-access purposes, the authors have applied a CC BY public copyright license to any author-accepted manuscript version arising from this submission.} 

\begin{abstract}
The exponential decay of weak solutions towards the constant steady state of general cross-diffusion systems in bounded domains with no-flux boundary conditions is investigated. The proof of quantitative decay rates is based on the relative entropy method and involves two parameters: the exponent determining the entropy density and the exponent in the entropy production integral. The key ingredient are novel convex Sobolev inequalities, which extend existing results in the literature to a broad range of values of the two parameters. These inequalities are established using convexity arguments and the Gagliardo--Nirenberg inequality.
\end{abstract}

\keywords{Exponential decay, cross-diffusion systems, relative entropy method, convex Sobolev inequality.}  
 
\subjclass[2000]{35A23, 35B40, 35K51, 39B62.}

\maketitle


\section{Introduction}

The long-time behavior of solutions to cross-diffusion systems is a fundamental question in the analysis of multicomponent diffusion processes. In contrast to classical diffusion equations, the coupling between the different components may lead to a diffusion matrix that is neither symmetric nor positive definite, making the derivation of quantitative convergence rates particularly challenging. A powerful framework for studying such systems is provided by the entropy structure, which allows one to exploit the underlying dissipative mechanism \cite{Jue16}. In this paper, we develop a general approach to proving exponential convergence towards equilibrium based on the relative entropy method. The key ingredient are novel convex Sobolev inequalities, extending those from \cite{AbLe24} and providing a quantitative relation between the relative entropy and its associated dissipation. Combined with the entropy inequality, this leads to a differential inequality for the relative entropy and, by Gronwall's inequality, to the exponential decay of the solutions towards equilibrium. Our approach applies to a broad class of cross-diffusion systems and highlights the interplay between the entropy structure and convex functional inequalities in determining the rate of relaxation.

\subsection{Model setting}

We consider cross-diffusion systems of the type
\begin{align}
  & \pa_t u_i = \diver\bigg(\sum_{j=1}^n A_{ij}(u)\na u_j\bigg)
  \quad\mbox{in }\Omega,\ t>0,\ i=1,\ldots,n, \label{1.eq} \\
  & u_i(0)=u_i^0\quad\mbox{in }\Omega, \quad
  \sum_{j=1}^n A_{ij}(u)\na u_j\cdot\nu = 0\quad\mbox{on }\pa\Omega,
  \ t>0, \label{1.bic}
\end{align}
where $\Omega\subset\R^d$ ($d\ge 1$) is a bounded domain, $\nu$ is the exterior unit normal vector to $\pa\Omega$, $u=(u_1,\ldots,u_n)$ is the solution vector, and $A_{ij}(u)$ are diffusion coefficients. The variables $u_i$ describe particle densities or volume fractions of multicomponent mixtures. In many applications, the diffusion matrix $A(u)=(A_{ij}(u))$ is neither symmetric nor positive definite, but equations \eqref{1.eq} possess an entropy structure \cite{Jue16}. This means that there exists a convex function $h:\dom\to\R$, called an entropy density, where $\dom\subset(0,\infty)^n$ is an open set, and numbers $c_A>0$ and $s>0$ such that for all $u\in\dom$ and $z\in\R^n$,
\begin{align}\label{1.posdef}
  z^Th''(u)A(u)z \ge c_A\sum_{i=1}^n u_i^{2s-2}z_i^2,
\end{align}
where $h''(u)$ denotes the Hessian of $h$. The existence of global weak solutions to \eqref{1.eq}--\eqref{1.bic} was proved in \cite{Jue15} under suitable assumptions on the data. Examples include the Maxwell--Stefan equations \cite{JuSt13} and thin-film solar-cell systems \cite{BaEh18} with $s=1/2$, Shigesada--Kawasaki--Teramoto (SKT) models \cite{ChJu04} with $s=1$, and general population models \cite{CDJ18} with $s>0$. In this paper, we restrict our attention to entropy densities of the form 
\begin{align}\label{1.h}
  h_1(u) = \sum_{i=1}^n u_i(\log u_i-1), \quad
  h_q(u) = \sum_{i=1}^n\frac{u_i^q-u_i}{q-1}\quad\mbox{for }q>1,
\end{align}
which arise in many applications, in particular in those mentioned above. The function $h_1$ is called the Boltzmann--Shannon entropy density, while $h_q$ refers to the Tsallis entropy density \cite{Tsa88}. Observe that $h_q$ converges pointwise to $h_1$ as $q\to 1$. 

The unique steady state associated to \eqref{1.eq} is constant and given by $\bar u=(\bar{u}_1,\ldots,\bar{u}_n)$, where
\begin{align*}
  \bar u_i^0 := \fint_\Omega u_i^0 \dx  
  = \frac{1}{|\Omega|}\int_\Omega u_i^0 \dx , \quad i=1,\ldots,n.
\end{align*}
A natural question is to ask how fast the solution $u(t)$ to \eqref{1.eq}--\eqref{1.bic} converges to this steady state. It turns out that in many cross-diffusion systems, this convergence is exponential. The proof is based on the so-called relative entropy method, using the relative entropy density
\begin{align}\label{1.hrel}
  h(u|\bar u) = h(u) - h(\bar u) - h'(\bar u)\cdot(u-\bar u).
\end{align}
A formal computation, using \eqref{1.posdef}, shows that
\begin{align*}
  \frac{\dd}{\dd t}\int_\Omega h(u|\bar u)\dx 
  = -\int_\Omega\na u^T:h''(u)A(u)\na u\dx  
  \le -\frac{c_A}{s^2}\sum_{i=1}^n\int_\Omega|\na u_i^s|^2 \dx .
\end{align*}
If $q=1$ and $s=1/2$, the logarithmic Sobolev inequality 
\begin{align}\label{1.lsi}
  \int_\Omega h_1(u|\bar u)\dx  
  = \int_\Omega u_i\log\frac{u_i}{\bar{u}_i}\dx 
  \le C_{LS}\int_\Omega|\na\sqrt{u_i}|^2 \dx 
\end{align}
then leads to 
\begin{align*}
  \frac{\dd}{\dd t}\int_\Omega h_1(u|\bar u)\dx 
  + \frac{c_A}{s^2 C_{LS}}\int_\Omega h_1(u|\bar u)\dx  \le 0.
\end{align*}
Gronwall's inequality implies the exponential decay
\begin{align*}
  \int_\Omega h_1(u(t)|\bar u)\dx  \le e^{-\lambda t}
  \int_\Omega h_1(u^0|\bar u)\dx , \quad t>0,
\end{align*}
where the decay rate $\lambda=c_A/(s^2C_{LS})$ is semi-explicit. By the Csisz\'ar--Kullback inequality \cite{Csi67,Kul67} (see Proposition \ref{prop.cki}), we deduce the exponential decay of $u(t)$ towards $\bar u$ in the $L^1(\Omega)$ norm with rate $\lambda/2$. For general values of $q$ and $s$, convex Sobolev inequalities of the type
\begin{align}\label{1.ineq} 
  \int_\Omega h_q(u|\bar u)\dx  
  \le C_S\sum_{i=1}^n\int_\Omega|\na u_i^s|^2 \dx 
\end{align}
are required. This inequality has been proved up to now for special values of $(q,s)$ only. For instance, $(q,s)=(2,1)$ corresponds to the Poincar\'e--Wirtinger inequality, and the case $(q,s)=(1,1)$ was analyzed in \cite[Theorem 1]{AbLe24}. The choice $1<q<2$, $s=q/2$ yields the Beckner inequality \cite{Bec89}, extended to $s>q/2$ in \cite[Lemma 3.6]{CJP16}. General convex functions $\psi(z)$ (instead of $z^q$) such that $1/\psi''$ is concave have been investigated in \cite[Remark 3.8]{AMTU01}. The aim of this paper is to generalize inequality \eqref{1.ineq} to a broad range of admissible pairs $(q,s)$ and to deduce exponential decay rates for the solutions to \eqref{1.eq}--\eqref{1.bic}. 


\subsection{State of the art}

In principle, the exponential decay of solutions to the steady state can be established by estimating the spectral gap to the associated differential operator. However, this technique is essentially limited to linear problems. The relative entropy provides a nonlinear measure of the distance between two solutions that is particularly well suited to nonlinear problems. This concept was first used in \cite{Daf79} as a mathematical tool to prove uniqueness and continuous dependence of smooth thermodynamic processes. The relative entropy method was extended to establish exponential decay to equilibrium for diffusion equations in \cite{AMTU01,CaTo00}, and subsequently to diffusion systems with diagonal diffusion matrices in \cite{DeFe06}. 

An early use of the relative entropy method in cross-diffusion systems can be found in \cite{ChJu06}. The logarithmic Sobolev inequality \eqref{1.lsi} allows for the determination of exponential equilibration rates in nondegenerate cross-diffusion systems, like Maxwell--Stefan systems \cite{DJT20}, thin-film solar-cell models \cite{HoBu22}, and volume-filling population systems \cite{CJLL24}. The method was generalized to Maxwell--Stefan equations with reversible reactions \cite{DJT20} and to SKT models with Lotka--Volterra terms \cite{JuZa16,Shi02}. 

A related technique is to exploit the gradient-flow structure with respect to the Wasserstein metric. For coupled multicomponent systems, this technique requires a special algebraic structure; see, e.g., \cite{LiMi13,ZiMa15}. More generally, exponential convergence results have been obtained for systems with small cross-diffusion terms \cite{BMZ23,MaPa26}. 

Here, we obtain exponential equilibration rates owing to the positive definiteness property \eqref{1.posdef}. Due to the singular or degenerate structure induced by the exponent $s$, the logarithmic Sobolev inequality \eqref{1.lsi} is not sufficient to close the argument. Instead, we require novel convex Sobolev inequalities, which are established in this paper.


\subsection{Results}

First, we introduce the relative entropy densities associated to \eqref{1.h}:
\begin{align*}
  h_1(u|\bar u) &= h_1(u) - h_1(\bar u) - h_1'(\bar u)\cdot(u-\bar u)
  = \sum_{i=1}^n u_i\log\frac{u_i}{\bar{u}_i}, \\
  h_q(u|\bar u) &=  h_q(u) - h_q(\bar u) - h_q'(\bar u)\cdot(u-\bar u)
  = \sum_{i=1}^n \frac{u_i^q-\bar{u}_i^q}{q-1}\quad\mbox{for }q>1,
\end{align*}
and we set 
\begin{align}\label{1.eta}
  \eta_1(g|\bar g) = g\log\frac{g}{\bar{g}}, \quad
  \eta_q(g|\bar g) = \frac{g^q-\bar{g}^q}{q-1}\quad\mbox{for }q>1.
\end{align}
We impose the following assumptions:
\begin{itemize}
\item[(H1)] Domains: $\Omega\subset\R^d$ ($d\ge 1$) is a bounded domain with Lipschitz boundary and $\dom \subset(0,\infty)^n$ is a domain.
\item[(H2)] Data: $u^0=(u_1^0,\ldots,u_n^0)\in L^q(\Omega;\R^n)$ for $q\ge 1$ is such that $u^0(x)\in\overline{\dom }$ for a.e.\ $x\in\Omega$.
\item[(H3)] Positive definiteness: There exist $s>0$ and $c_A>0$ such that 
\begin{align*}
  z^Th_q''(u)A(u)z \ge c_A\sum_{i=1}^n u_i^{2s-2}z_i^2 
  \quad\mbox{for all }z\in\R^n,\ u\in \dom.
\end{align*}
\end{itemize}

Assuming additionally that the set $\dom$ is bounded, the diffusion matrix is continuous on $\overline{\dom}$ and satisfies $|A_{ij}(u)u_j^{1-s}|\le C$ for some $C>0$ if $s>1$, the existence of a global weak solution $u$ to \eqref{1.eq}--\eqref{1.bic}, satisfying $u(x,t)\in\overline{\dom}$ for a.e.\ $(x,t)\in\Omega\times(0,T)$ and 
\begin{align}\label{1.regul}
  u\in L^2_{\rm loc}(0,\infty;H^1(\Omega;\R^n)), \quad
  \pa_t u\in L^2_{\rm loc}(0,\infty;H^1(\Omega;\R^n)')
\end{align}
is proved in \cite[Theorem 2]{Jue15}. Moreover, the entropy inequality
\begin{align}\label{1.ei}
  \frac{\dd}{\dd t}\int_\Omega h(u)\dx  
  + \int_\Omega\na u^T:h''(u)A(u)\na u \dx  \le 0, \quad t>0,
\end{align} 
is fulfilled \cite[Appendix  A]{HeJu26}. In particular, since  $\dom$ is bounded, the constructed weak solution is bounded too. If $\dom$ is not bounded, the result of \cite{Jue15} is not directly applicable, but its proof technique often still applies. Therefore, we assume that a weak solution satisfying \eqref{1.ei} exists. Our main result is as follows.

\begin{theorem}[Exponential decay]\label{thm.time}
Let Hypotheses (H1)--(H3) hold and let $u$ be a nonnegative weak solutions $u$ to \eqref{1.eq}--\eqref{1.bic} satisfying \eqref{1.regul} and \eqref{1.ei}. Let 
\begin{align*}
  q=1,\ s>\frac{d-2}{2d} \quad\mbox{or}\quad
  q>1,\ s\ge \frac{d-2}{2d}q
\end{align*}
and set $r=\min\{2,q\}$. Then there exists a constant $\lambda>0$, depending on the $L^q(\Omega)$ norm of $u^0$ as well as on $\Omega$, $d$, $q$, $s$, such that
\begin{align*}
  \|u-\bar{u}\|_{L^r(\Omega)}\le e^{-\lambda t}
  \bigg(\int_\Omega h_q(u^0|\bar u)\dx \bigg)^{1/2} \quad\mbox{for }t>0.
\end{align*}
\end{theorem}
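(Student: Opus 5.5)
We will prove Theorem \ref{thm.time} with the relative entropy method. We combine the entropy inequality \eqref{1.ei} with a convex Sobolev inequality of the form \eqref{1.ineq}; this inequality will be established later in the paper and is assumed here. Write $H(t)=\int_\Omega h_q(u(t)|\bar u)\dx$. Two facts make $H$ a sensible Lyapunov functional. First, the no-flux condition \eqref{1.bic} conserves mass, so $\fint_\Omega u_i(t)\dx=\bar u_i$ for all $t>0$. Second, $h_q'(\bar u)$ is constant. Together these give $\int_\Omega h_q'(\bar u)\cdot(u-\bar u)\dx=0$. Hence $H(t)=\int_\Omega h_q(u)\dx-|\Omega|h_q(\bar u)$, and \eqref{1.ei} holds verbatim with $h_q$ replaced by $h_q(\cdot|\bar u)$. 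To make this rigorous for weak solutions we integrate \eqref{1.ei} in time and test the equation with the constant $h_q'(\bar u)$, which is admissible by \eqref{1.regul}.

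Next we bound the entropy production. Apply (H3) pointwise with $z=\pa_k u$ for each $k$ and sum over $k$. Using $u_i^{2s-2}|\na u_i|^2=s^{-2}|\na u_i^s|^2$, this gives
\begin{align*}
  \frac{\dd H}{\dd t} + \frac{c_A}{s^2}\sum_{i=1}^n\int_\Omega|\na u_i^s|^2\dx \le 0 .
\end{align*}
The central step is now the convex Sobolev inequality
\begin{align*}
  \int_\Omega \eta_q(u_i|\bar u_i)\dx \le C_S\int_\Omega |\na u_i^s|^2\dx,
\end{align*}
applied componentwise, where $\eta_q$ is defined in \eqref{1.eta}. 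It must hold in exactly the parameter range $q=1$, $s>(d-2)/(2d)$, or $q>1$, $s\ge (d-2)q/(2d)$, and we expect proving it to be the main obstacle. The range is the Sobolev-embedding threshold $H^1\hookrightarrow L^{2d/(d-2)}$ applied to $v=u_i^s$. Indeed $u_i^q=v^{q/s}$, so $q/s\le 2d/(d-2)$ is what is needed to control $\int u_i^q$ by $\|v\|_{H^1}$.

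The constant $C_S$ will not be universal: it depends on the mean $\bar u_i$ and possibly on $\|u_i\|_{L^q}$. This is why $\lambda$ depends on $\|u^0\|_{L^q}$. That dependence is harmless, since $H(t)\le H(0)$ together with the mass bound yields uniform $L^q$ bounds along the trajectory. For proving the inequality itself, we would first try a convexity and homogeneity reduction to the normalized case $\bar u_i=1$. We would then split into two regimes:
\begin{itemize}
\item near equilibrium, reduce to the Poincar\'e--Wirtinger inequality for $v$, using that $\eta_q(g|1)$ is comparable to $|g^s-1|^2$ when $g$ is close to $1$;
\item far from equilibrium, interpolate with the Gagliardo--Nirenberg inequality, using the mass constraint to control the $L^{1/s}$ norm of $v$.
\end{itemize}
The endpoint $q>1$, $s=(d-2)q/(2d)$ uses the critical Sobolev embedding and so needs some care. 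The strict inequality for $q=1$ reflects the extra logarithm in $\eta_1$.

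With the convex Sobolev inequality in hand, we obtain $\frac{\dd H}{\dd t}+\frac{c_A}{s^2C_S}H\le 0$, and Gronwall's inequality gives $H(t)\le e^{-2\lambda t}H(0)$ with $2\lambda=c_A/(s^2C_S)$. It remains to convert $H$ into an $L^r$ distance with $r=\min\{2,q\}$:
\begin{itemize}
\item for $q=1$, the Csisz\'ar--Kullback inequality (Proposition \ref{prop.cki}) gives $\|u-\bar u\|_{L^1}^2\le C H$;
\item for $1<q\le2$, a Csisz\'ar--Kullback type inequality for Tsallis entropies, obtained from uniform convexity of $z\mapsto z^q$ on bounded sets combined with H\"older's inequality, gives $\|u-\bar u\|_{L^q}^2\le CH$;
\item for $q\ge2$, the elementary bound $\eta_q(g|\bar g)-\eta_q'(\bar g)(g-\bar g)\ge c|g-\bar g|^2$ gives $\|u-\bar u\|_{L^2}^2\le CH$.
\end{itemize}
Taking square roots yields the stated bound; the constants are absorbed into $\lambda$ or normalized away.
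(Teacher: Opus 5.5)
Your proposal is essentially the paper's proof of Theorem~\ref{thm.time}: mass conservation together with \eqref{1.ei} and (H3), then the convex Sobolev inequality of Theorem~\ref{thm.csi}, made uniform in time through two-sided $L^q$ bounds along the trajectory, then Gronwall and the Csisz\'ar--Kullback variants of Proposition~\ref{prop.cki}; you leave the lower bound $\|u_i(t)\|_{L^q(\Omega)}\ge|\Omega|^{1/q}\bar u_i$ (from H\"older) implicit, and it is exactly what is needed when $q<2s$. One correction, which applies equally to the paper's own write-up: the Csisz\'ar--Kullback constants cannot be ``absorbed into $\lambda$'' or normalized away, because for $q\neq 2$ the prefactor-free bound is not scale-invariant and fails as $t\to0^+$ for data $Mu^0$ with $M$ large (or small, if $q>2$), so the correct conclusion carries a prefactor $C(\|u^0\|_{L^q(\Omega)})$, as in Section~\ref{sec.exam}.
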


The theorem is formulated for $d\ge 3$, but it also holds for $d=1,2$ with obvious modifications. The generalization to cross-diffusion equations with reaction terms $r_i(u)$ is delicate, since constant steady states $\bar u$ must also satisfy $r_i(\bar u)=0$. Thus, we need to establish a convex Sobolev inequality for elements on the manifold defined by $r(\bar u)=0$. Such an inequality was proved for Maxwell--Stefan systems with reactions in \cite{DJT20}. A general result, however, is not known. We do not consider the case $0<q<1$, since $\|\cdot\|_{L^q(\Omega)}$ is only a quasi-norm. As described above, the proof of Theorem \ref{thm.time} is based on the convex Sobolev inequality \eqref{1.ineq} and a generalized Csisz\'ar--Kullback inequality; see Proposition \ref{prop.cki}. 

\begin{theorem}[General convex Sobolev inequality]\label{thm.csi}
Let $q= 1$, $s>q(d-2)/(2d)$ or $ q >1$, $s \geq q(d-2)/(2d)$. Then there exists $C(q,s)>0$ only depending on $\Omega$, $d$, $q$, $s$ such that for all nonnegative functions $g$ with $g\in L^q(\Omega)$ and $\na g^s\in L^2(\Omega)$,
\begin{align*}
  \int_\Omega \eta_q(g|\bar g)\dx  \le C(q,s)\|g\|_{L^q(\Omega)}^{q-2s}
  \int_\Omega|\na g^{s}|^2 \dx ,
\end{align*}
where $\eta_q$ is the entropy density defined in \eqref{1.eta}. 
\end{theorem}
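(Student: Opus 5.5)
Both sides of the inequality have the same homogeneity under $g\mapsto \mu g$: the left side scales like $\mu^q$ (for $q=1$ we use $\int g\log(g/\bar g)$, which is $1$-homogeneous), and the right side scales like $\mu^{q-2s}\mu^{2s}=\mu^q$. So we normalize $\|g\|_{L^q(\Omega)}=1$, and we must show $\int_\Omega\eta_q(g|\bar g)\dx\le C\int_\Omega|\na g^s|^2\dx$. We set $f=g^s$, so that $g=f^{1/s}$, $\|f\|_{L^{q/s}}=1$, and $\na f\in L^2$.

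The argument splits by the size of $\|\na f\|_{L^2}$. We first bound the left side above by a quantity that vanishes when $g$ is constant. For $q>1$, $\eta_q(g|\bar g)$ integrates to $(\|g\|_q^q-|\Omega|\bar g^q)/(q-1)$. By convexity (Jensen) this is nonnegative, and it is bounded by $\frac{q}{q-1}\int_\Omega |g-\bar g|(g+\bar g)^{q-1}\dx$ via the mean value theorem. For $q=1$ we use $\int g\log(g/\bar g)\le \int (g-\bar g)^2/\bar g$, or alternatively the bound by $\|g\|_{L^p}$ for some $p>1$.

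In the regime where $\|\na f\|_2$ is bounded below by a fixed $\delta>0$, the left side is bounded by a constant times $\|g\|_{L^q}^q=1$. In the case $q=1$ the entropy $\int g\log g$ is controlled by $\|g\|_{L^p}^p$ for $p$ slightly above $1$. We then need $\|g\|_{L^p}^p=\|f\|_{L^{p/s}}^{p/s}$ to be controlled by $\|f\|_{L^{q/s}}+\|\na f\|_2$. Sobolev embedding $H^1\subset L^{2d/(d-2)}$ (together with Gagliardo--Nirenberg and interpolation with the $L^{q/s}$ norm) gives this provided $p/s\le 2d/(d-2)$, i.e.\ $s\ge p(d-2)/(2d)$. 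This is exactly the hypothesis, and for $q=1$ it is where strict inequality lets us take $p>1$. Since $\|\na f\|_2\ge\delta$, any power of $\|\na f\|_2$ is bounded by $C\|\na f\|_2^2$ when the exponent is $\le 2$; for larger exponents we need a tail estimate. That tail estimate is where Gagliardo--Nirenberg with the $L^{q/s}$ normalization is used, and we must check that the exponent on $\|\na f\|_2$ stays $\le 2$.

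In the regime of small gradient, $\|\na f\|_2<\delta$, the Poincaré--Wirtinger inequality shows that $f$ is close in $H^1$ to the constant $\bar f$. Combined with $\|f\|_{q/s}=1$, this makes $\bar f$ comparable to $|\Omega|^{-s/q}$, so $g$ is near a positive constant in $L^{q}$ (using Sobolev once more to control the needed norms). We then linearize. On the set where $f$ lies within a factor $2$ of $\bar f$, we have $|g-\bar g|\lesssim |f-\bar f|$ plus $|\bar g-\bar f^{1/s}|$, and the latter is itself bounded by $\|f-\bar f\|$. The integrand is therefore quadratic, $\lesssim |f-\bar f|^2$. On the complementary set the integrand is $\lesssim |f-\bar f|^{q/s}$ or $\lesssim |f-\bar f|^{2d/(d-2)}$-type powers, which Sobolev controls by $\|\na f\|_2^{\theta}$ with $\theta\ge2$ precisely when $q/s\le 2d/(d-2)$ (entropic version for $q=1$). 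Thus the left side is $\lesssim\|f-\bar f\|_{2}^2+\|\na f\|_2^{\theta}\lesssim\|\na f\|_2^2$. This contradiction-free, quantitative version can alternatively be replaced by a compactness argument: take $g_k$ with the ratio blowing up, extract a limit using Rellich, and conclude that the limit is constant. In that version the same linearization is needed to rule out the ratio degenerating near constants.

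\textbf{Main obstacle.} The hardest step will be the near-constant regime with exponents at the endpoint $s=q(d-2)/(2d)$. There only the critical Sobolev embedding (not compact) is available, so the super-quadratic remainder must be handled with the exact power $\theta=2\cdot\frac{q}{s}\cdot\frac{d-2}{2d}\cdot\ldots\ge2$ rather than by compactness. I would first try to carry out the Taylor expansion of $t\mapsto t^{q/s}$ around $\bar f$ with explicit remainders, treating $q/s\le2$ and $q/s>2$ separately.
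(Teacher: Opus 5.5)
Your two-regime strategy is genuinely different from the paper's and can be completed, but as written the small-gradient step fails. (The strategy: normalize $\|g\|_{L^q(\Omega)}=1$, bound the left side crudely when $\|\na g^s\|_{L^2}\ge\delta$, and linearize around a constant when it is small.) The paper never splits by the size of the gradient. Jensen bounds the entropy by a multiple of $\log\|f\|_{L^p}^p$. Poincar\'e--Wirtinger and Gagliardo--Nirenberg bound $\|f\|_{L^p}^p$ by $1$ plus a polynomial in $z=\|\na f\|_{L^2}^2$. Lemma~\ref{lem.ineq} ($\log(1+\cdots)\le Kz$) then lets the logarithm absorb large gradients, and $q>1$ is reduced to the entropy of $g^q$ via convexity of $r\mapsto r\log\fint_\Omega g^{q/r}\dx$. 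That route gives semi-explicit constants and needs no threshold $\delta$; yours, once repaired, treats all $q\ge1$ and the endpoint in one framework.

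The repair needed is this. The only concrete bound you give, $\frac{q}{q-1}\int_\Omega|g-\bar g|(g+\bar g)^{q-1}\dx$, is of \emph{first} order in $g-\bar g$ and yields $\|\na f\|_{L^2}$, not $\|\na f\|_{L^2}^2$. Likewise, $g^q-\bar g^q$ is not quadratic on $\{\bar f/2\le f\le 2\bar f\}$. Quadratic behaviour comes only from the cancellation $\int_\Omega(g-\bar g)\dx=0$, which you never invoke. Use that, by convexity, for every constant $c>0$, $\int_\Omega\eta_q(g|\bar g)\dx\le\frac{1}{q-1}\int_\Omega(g^q-c^q-qc^{q-1}(g-c))\dx$, and for $q=1$, $\int_\Omega g\log(g/\bar g)\dx\le\int_\Omega(g\log(g/c)-g+c)\dx$. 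With $c=\bar f^{1/s}$ the integrand is nonnegative and at most $C\bar f^{q/s-2}|f-\bar f|^2$ on the good set. This also makes your comparison of $\bar g$ with $\bar f^{1/s}$ unnecessary.

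Your bad-set estimate is also mis-stated. Sobolev--Poincar\'e gives $\int_\Omega|f-\bar f|^{q/s}\dx\le C\|\na f\|_{L^2}^{q/s}$ when $q/s\le 2d/(d-2)$, but the exponent is then $q/s$. This is less than $2$ whenever $s>q/2$ (e.g.\ $s=q$, or $q=1$ with $s>1/2$), and such a power is not bounded by $C\|\na f\|_{L^2}^2$ as $\|\na f\|_{L^2}\to0$. In that case use $|f-\bar f|\ge\bar f/2$ on the bad set, so that $|f-\bar f|^{q/s}\le(\bar f/2)^{q/s-2}|f-\bar f|^2$, and then apply Poincar\'e--Wirtinger.

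For $q=1$, the $\chi^2$ bound $\int_\Omega(g-\bar g)^2\bar g^{-1}\dx$ requires $g\in L^2$, i.e.\ (for $d\ge3$) $s\ge(d-2)/d$. For $(d-2)/(2d)<s<(d-2)/d$ it is infinite in general. There you must use the Bregman form above with $g\log_+(g/c)\le\eps^{-1}g^{1+\eps}c^{-\eps}$ on the bad set. The condition $(1+\eps)/s\le 2d/(d-2)$ is exactly where the strict inequality $s>(d-2)/(2d)$ enters.

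Finally, the endpoint is not the obstacle you anticipate. For $q>1$ the large-gradient regime is trivial, since the left side is at most $1/(q-1)$. In the small-gradient regime $q/s=2d/(d-2)>2$, so $\|f-\bar f\|_{L^{2d/(d-2)}}^{2d/(d-2)}\le C\delta^{4/(d-2)}\|\na f\|_{L^2}^2$, and no compactness is needed. The paper's endpoint lemma does precisely this Taylor expansion around $\overline{g^s}$. It even avoids the split, by bounding $\|f-\bar f\|_{L^{q/s}}^{q/s-2}$ by $C\|g\|_{L^q}^{q-2s}$.
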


The case $q=1$ and $s=1/(p-2)$ for integers $p\ge 3$ was established in \cite[Theorem 1]{AbLe24}. The proof presented there is remarkably elementary and only relies on the Jensen, H\"older, and Poincar\'e--Wirtinger inequalities, together with a linear control of logarithmic terms. In this work, we extend this result to $q\ge 1$ and a broader range of values of $s$. Our proof employs similar ingredients to those used in \cite{AbLe24}, supplemented by convexity arguments for the function $G(r)=r\log(\int_\Omega g^{q/r}\dx )$ for $r\ge 1$ and the Gagliardo--Nirenberg inequality. The proof of Theorem \ref{thm.csi} proceeds through a sequence of interdependent lemmas. The logical flow of the proof is illustrated in Figure \ref{fig}. Observe that the lower bound $s\ge q(d-2)/(2d)$ is optimal; see Remark \ref{rem.opt}.

\begin{figure}[ht]
\includegraphics[width=125mm]{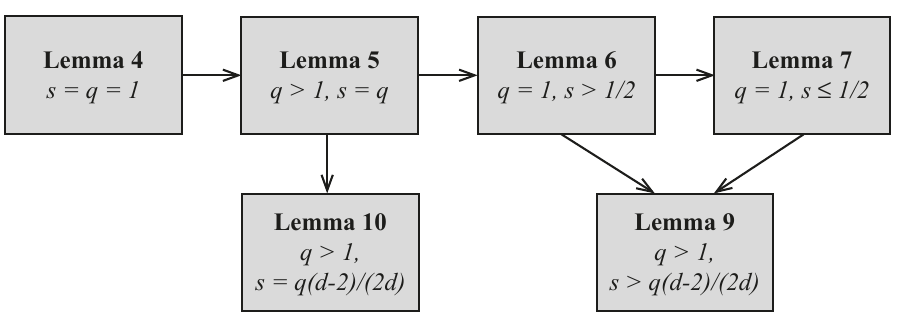}
\caption{Logical flow of the proof of Theorem \ref{thm.csi}.}
\label{fig}
\end{figure}

The convex Sobolev inequality in Theorem \ref{thm.csi} shows that the H\"older deficit 
\begin{align*}
  D(u) := \|u\|_{L^p(\Omega)}^p
  - |\Omega|^{1-q}\|u\|_{L^{p/q}(\Omega)}^q\ge 0
\end{align*}
is controlled by the Dirichlet energy of $u$. Indeed, setting $u=g^s$ and $p=q/s$, the convex Sobolev inequality is equivalent to
\begin{align*}
  D(u) \le C(q,s)\|u\|_{L^p(\Omega)}^{p-2}\|\na u\|_{L^2(\Omega)}^2. 
\end{align*}
The deficit vanishes identically on constants, and, being differentiable there, vanishes to second order. Thus, the right-hand side must be quadratic in $\na u$.

The constant $C(q,s)$ constructed in the proof is not sharp. For $1\le q\le 2$, the Bakry--Emery method may give better constants, but it does not work in the regime $q>2$, where our interpolation-theoretic arguments still work.

The paper is organized as follows. The proof of Theorem \ref{thm.csi} is given in Section \ref{sec.csi}, and the generalized Csisz\'ar--Kullback inequality is proved in Section \ref{sec.cki}. Based on these preparations, Section \ref{sec.time} is devoted to the proof of Theorem \ref{thm.time}. Finally, we apply our main theorem to two concrete cross-diffusion systems in Section \ref{sec.exam}. 


\section{Proof of Theorem \ref{thm.csi}}\label{sec.csi}

We begin with the following lemma that gives a linear control of logarithmic terms.

\begin{lemma}\label{lem.ineq}
Let $p>1$ and $a_1>0$, $a_2\ge 0$, $a_3\ge 0$. Then, for all $z\ge 0$,
\begin{align*}
  \log(1 + a_1z)\le a_1z, \quad 
  \log(1 + a_1z + a_2z^2)\le K_1z, \quad
  \log(1 + a_1z + a_2z^2 + a_3z^p)\le K_2z,
\end{align*}
where $K_1=\max\{a_1,2a_2/a_1\}$ and $K_2=\max\{a_1+a_3, 2a_2/a_1, p-1\}$.
\end{lemma}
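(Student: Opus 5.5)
The plan is to prove all three inequalities with one monotonicity argument. For $K>0$ and a polynomial-type function $P(z)\ge 1$ with $P(0)=1$, set $f(z)=Kz-\log P(z)$. Then $f(0)=0$, and $f$ is nondecreasing on $[0,\infty)$ as soon as
\begin{align*}
  K\,P(z)\ge P'(z)\quad\mbox{for all }z\ge 0 .
\end{align*}
So each inequality reduces to checking $KP\ge P'$ termwise, by distributing the terms of $KP(z)$ among the terms of $P'(z)$.

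For the first inequality, $P(z)=1+a_1z$ and $K=a_1$. We need $a_1(1+a_1z)\ge a_1$, which is obvious. Equivalently, this is the elementary bound $\log(1+x)\le x$.

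For the second inequality, $P(z)=1+a_1z+a_2z^2$ and $K=K_1$. We need
\begin{align*}
  K_1 + K_1a_1z + K_1a_2z^2 \ge a_1 + 2a_2z .
\end{align*}
Since $K_1\ge a_1$, the constant term dominates $a_1$. Since $K_1\ge 2a_2/a_1$, we get $K_1a_1z\ge 2a_2z$. The quadratic term on the left is nonnegative and can be dropped.

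For the third inequality, $P(z)=1+a_1z+a_2z^2+a_3z^p$ and $K=K_2$. We must show
\begin{align*}
  K_2(1+a_1z+a_2z^2+a_3z^p)\ge a_1+2a_2z+pa_3z^{p-1} .
\end{align*}
The terms $a_1$ and $2a_2z$ are handled as before, using $K_2\ge a_1$ for the constant and $K_2\ge 2a_2/a_1$ for the linear term. The only new term, and the main point, is $pa_3z^{p-1}$. Since $p>1$, it is not directly dominated by any single monomial on the left. We treat it by Young's inequality with exponents $p/(p-1)$ and $p$:
\begin{align*}
  z^{p-1}\le \frac{p-1}{p}z^p+\frac1p ,
  \quad\mbox{so}\quad
  pa_3z^{p-1}\le a_3+(p-1)a_3z^p .
\end{align*}
The constant $a_3$ is absorbed together with $a_1$ by the constant term, because $K_2\ge a_1+a_3$. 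The term $(p-1)a_3z^p$ is absorbed by $K_2a_3z^p$, because $K_2\ge p-1$. The term $K_2a_2z^2$ is again unused.

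Hence $f'\ge 0$ and $f(0)=0$ in all three cases, which gives the claimed bounds for all $z\ge 0$.
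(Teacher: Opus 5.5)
Your proposal is correct and matches the paper's proof essentially step for step. Both reduce to showing $K P(z)\ge P'(z)$ for $z\ge 0$, so that $Kz-\log P(z)$ is nondecreasing from $0$, and then compare terms, using Young's inequality $z^{p-1}\le \frac{p-1}{p}z^p+\frac1p$ to absorb $pa_3z^{p-1}$ via $K_2\ge a_1+a_3$ and $K_2\ge p-1$.
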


\begin{proof}
Let $F(z) = \log(1+a_1z + a_2z^2 + a_3z^p)-Kz$ for $z\ge 0$. Then $F(0)=0$ and $F'(z)\le 0$ if and only if
\begin{align*}
  a_1+2a_2z+pa_3z^{p-1} \le K(1+a_1z+a_2z^2+a_3z^p)
\end{align*}
or if and only if
\begin{align*}
  (K-a_1) + (Ka_1-2a_2)z + Ka_2z^2 + Ka_3z^p - pa_3z^{p-1} \ge 0.
\end{align*}
If $a_2=a_3=0$, it is sufficient to choose $K=a_1$. If $a_3=0$, we take $K=K_1=\max\{a_1,2a_2/a_1\}$. If $a_3>0$, Young's inequality gives for the last two terms:
\begin{align*}
  Ka_3z^p - pa_3z^{p-1} 
  \ge Ka_3z^p - pa_3\bigg(\frac{p-1}{p}z^p+\frac{1}{p}\bigg),
\end{align*}
leading to 
\begin{align*}
  (K-a_1) &+ (Ka_1-2a_2)z + Ka_2z^2 + Ka_3z^p - pa_3z^{p-1} \\
  &\ge (K-a_1-a_3) + (Ka_1-2a_2)z + Ka_2z^2 + a_3(K-(p-1))z^p \ge 0,
\end{align*}
if $K=K_2=\max\{a_1+a_3, 2a_2/a_1, p-1\}$. In all cases, we have $F'(z)\le 0$ and hence $F(z)\le 0$ for $z\ge 0$.
\end{proof}

We formulate a variant of the logarithmic Sobolev inequality. For this, we recall the Poincar\'e--Wirtinger inequality for functions $g\in H^1(\Omega)$:
\begin{align}\label{2.pwi}
  \|g-\bar g\|_{L^2(\Omega)} \le C_P\|\na g\|_{L^2(\Omega)},
\end{align}
where $C_P>0$ only depends on the dimension $d$ and on $\Omega$. 

\begin{lemma}\label{lem.lsi}
It holds for all nonnegative functions $g\in L^1(\Omega)$ satisfying $\na g\in L^2(\Omega)$ that
\begin{align*}
  \int_\Omega g\log\frac{g}{\bar g}\dx  \le C_P^2\bar{g}^{-1}
  \int_\Omega|\na g|^2 \dx ,
\end{align*}
where $C_P>0$ is the constant of the Poincar\'e--Wirtinger inequality \eqref{2.pwi}.
\end{lemma}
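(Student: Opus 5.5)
The plan is to combine the elementary inequality $\log y\le y-1$ with the Poincar\'e--Wirtinger inequality \eqref{2.pwi}. We may assume $\bar g>0$; otherwise $g=0$ a.e.\ and the left-hand side vanishes with the convention $0\log 0=0$. The key observation is that, because $\int_\Omega(g-\bar g)\dx=0$, the relative entropy is dominated by the $\chi^2$-type quantity $\bar g^{-1}\int_\Omega(g-\bar g)^2\dx$. This quantity is exactly what \eqref{2.pwi} controls by $\|\na g\|_{L^2(\Omega)}^2$.

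First, apply $\log y\le y-1$ with $y=g/\bar g$ and multiply by $g\ge 0$. This gives, pointwise a.e.,
\begin{align*}
  g\log\frac{g}{\bar g}\le g\Big(\frac{g}{\bar g}-1\Big)
  = \frac{g(g-\bar g)}{\bar g}
  = \frac{(g-\bar g)^2}{\bar g} + (g-\bar g).
\end{align*}
On the set $\{g=0\}$ both sides are zero, so the bound holds there as well.

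Second, integrate over $\Omega$. The linear term drops out because $\int_\Omega (g-\bar g)\dx=0$ by the definition of $\bar g$. This leaves
\begin{align*}
  \int_\Omega g\log\frac{g}{\bar g}\dx \le \bar g^{-1}\|g-\bar g\|_{L^2(\Omega)}^2 .
\end{align*}
Applying \eqref{2.pwi} then gives the claim with constant $C_P^2$. An alternative route via Lemma \ref{lem.ineq} would be $\log(1+a_1 z)\le a_1 z$ with $z=(g-\bar g)/\bar g$, but this needs $z\ge 0$. The direct use of $\log y\le y-1$ avoids that sign issue, so I would use it instead.

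The only delicate point is regularity. The hypotheses give $g\in L^1(\Omega)$ with $\na g\in L^2(\Omega)$, and we must make sure $g\in L^2(\Omega)$ so that \eqref{2.pwi} applies. For a bounded Lipschitz domain, a function in $L^1$ whose gradient lies in $L^2$ belongs to $H^1(\Omega)$; this follows from the Sobolev--Poincar\'e inequality in $W^{1,1}$ followed by a bootstrap, or by truncation $\min\{g,k\}$ and monotone convergence. Truncation also ensures that every integral above is finite or that the inequality holds trivially, because if $\|g-\bar g\|_{L^2(\Omega)}$ were infinite the right-hand side would be infinite. I expect this justification to be the only real obstacle; the computation itself is immediate.
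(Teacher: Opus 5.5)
Your argument is correct and gives the same constant $C_P^2$, but by a slightly different route from the paper.

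\textbf{The paper's route.} It first applies Jensen's inequality for the concave logarithm with respect to the probability measure $g\,\dx/(|\Omega|\bar g)$. This gives $\int_\Omega g\log(g/\bar g)\dx\le|\Omega|\bar g\log\|f\|_{L^2(\Omega)}^2$ with $f=g/(\sqrt{|\Omega|}\bar g)$. It then bounds $\|f\|_{L^2(\Omega)}^2\le 1+C_P^2\|\na f\|_{L^2(\Omega)}^2$ by Poincar\'e--Wirtinger. Only at the end does it linearize, via $\log(1+a_1z)\le a_1z$ from Lemma \ref{lem.ineq} with $z=\|\na f\|_{L^2(\Omega)}^2$.

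\textbf{Your route.} You linearize pointwise first with $\log y\le y-1$, which gives the $\chi^2$-bound $\bar g^{-1}\|g-\bar g\|_{L^2(\Omega)}^2$ directly. Nothing is lost: Jensen followed by $\log x\le x-1$ gives the same bound as integrating the pointwise inequality, because $|\Omega|\bar g(\|f\|_{L^2(\Omega)}^2-1)=\bar g^{-1}\|g-\bar g\|_{L^2(\Omega)}^2$. Your version is shorter and needs neither Jensen nor Lemma \ref{lem.ineq}. It also shows plainly that this lemma is just ``entropy $\le\chi^2$'' plus Poincar\'e. The sign issue you raise does not occur in the paper, since Lemma \ref{lem.ineq} is only applied to $z=\|\na f\|_{L^2(\Omega)}^2\ge 0$ after Jensen.

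\textbf{What the paper's ordering buys.} It is a template for later lemmas. Keeping the logarithm outside the integral is what lets Lemma \ref{lem.qs2} absorb the superlinear Gagliardo--Nirenberg term $a_3z^{p/2}$ through $\log(1+a_1z+a_3z^{p/2})\le K_2z$. A pointwise linearization would leave that term unabsorbed.

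\textbf{Regularity.} Your truncation/Fatou justification that $g\in L^2(\Omega)$, so that Poincar\'e--Wirtinger applies with the same constant, is sound. It is also more explicit than the paper's one-line remark.
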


\begin{proof}
The result follows from \cite[Theorem 1]{AbLe24} choosing $p=3$. For completeness, we present the short proof. First, we observe that the Poincar\'e--Wirtinger inequality and the regularity for $g$ imply that $g\in L^2(\Omega)$ and hence $g\in H^1(\Omega)$. We apply Jensen's inequality to the concave function $z\mapsto\log z$ with respect to the probability measure $g(x)\dx /(|\Omega|\bar g)$:
\begin{align}\label{2.aux}
  \int_\Omega g\log\frac{g}{\bar g}\dx 
  = |\Omega|\bar g\int_\Omega \log\frac{g}{\bar g}
  \frac{g\dx }{|\Omega|\bar g}
  \le |\Omega|\bar g\log\bigg(\int_\Omega 
  \frac{g}{\bar g}\frac{g\dx }{|\Omega|\bar g}\bigg)
  = |\Omega|\bar g\log\|f\|_{L^2(\Omega)}^2,
\end{align}
where $f = g/(\sqrt{|\Omega|}\bar g)$. We estimate the $L^2(\Omega)$ norm of $f$, using the identities $\int_\Omega(f-\bar f)\dx =0$ and $\int_\Omega \bar{f}^2 \dx = 1$ (by direct computation) as well as the Poincar\'e--Wirtinger inequality:
\begin{align*}
  \|f\|_{L^2(\Omega)}^2 = \int_\Omega(f-\bar f)^2\dx  
  + \int_\Omega\bar{f}^2 \dx  
  \le C_P^2\|\na f\|_{L^2(\Omega)}^2 + 1.
\end{align*} 
Lemma \ref{lem.ineq} with $z=\|\na f\|_{L^2(\Omega)}^2$ and $a_1=C_P^2$ shows that
\begin{align*}
  \log \|f\|_{L^2(\Omega)}^2 
  \le \log\big(C_P^2\|\na f\|_{L^2(\Omega)}^2 + 1\big)
  \le C_P^2\|\na f\|_{L^2(\Omega)}^2.
\end{align*}
We insert this estimate into \eqref{2.aux} to find that 
\begin{align*}
  \int_\Omega g\log\frac{g}{\bar g}\dx 
  \le C_P^2|\Omega|\bar g\|\na f\|_{L^2(\Omega)}^2
  = C_P^2\bar{g}^{-1}\|\na g\|_{L^2(\Omega)}^2,
\end{align*}
which finishes the proof.
\end{proof}

The previous lemma can be used to prove a similar result for $q>1$.

\begin{lemma}\label{lem.q}
Let $q>1$. It holds for all nonnegative functions $g\in L^q(\Omega)$ satisfying $\na g^q\in L^2(\Omega)$ that
\begin{align*}
  \int_\Omega(g^q-\bar{g}^q)\dx  \le C_P^2
  \bigg(\fint_\Omega g^q \dx \bigg)^{-1}\int_\Omega|\na g^q|^2 \dx ,
\end{align*}
where $C_P>0$ is the constant of the Poincar\'e--Wirtinger inequality \eqref{2.pwi}.
\end{lemma}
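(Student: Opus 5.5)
The plan is to reduce the statement to Lemma~\ref{lem.lsi} applied to $f:=g^q$, and then to compare the Tsallis-type deficit $\int_\Omega(g^q-\bar g^q)\dx$ with the Boltzmann entropy of $f$. Set $m:=\fint_\Omega g^q\dx=\bar f$. By hypothesis $f\ge0$, $f\in L^1(\Omega)$ and $\na f\in L^2(\Omega)$, so Lemma~\ref{lem.lsi} gives
\begin{align*}
  \int_\Omega f\log\frac{f}{m}\dx \le C_P^2\,m^{-1}\int_\Omega|\na g^q|^2\dx .
\end{align*}
This already has exactly the right-hand side claimed in Lemma~\ref{lem.q}, with the sharp constant $C_P^2$. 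It therefore remains to prove the purely integral inequality
\begin{align*}
  |\Omega|\,(m-\bar g^q) \le \int_\Omega g^q\log\frac{g^q}{m}\dx .
\end{align*}

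For this comparison I would use log-convexity of moments. Let $\varphi(p):=\log\fint_\Omega g^p\dx$, which is convex in $p$ by H\"older's inequality. Then $\varphi(1)=\log\bar g$, $\varphi(q)=\log m$, and
\begin{align*}
  \frac{1}{|\Omega|m}\int_\Omega g^q\log\frac{g^q}{m}\dx = q\varphi'(q)-\varphi(q).
\end{align*}
Convexity gives the tangent inequality $\varphi(1)\ge\varphi(q)+(1-q)\varphi'(q)$, hence
\begin{align*}
  q\varphi'(q)-\varphi(q)\ge\frac{1}{q-1}\log\frac{m}{\bar g^q}.
\end{align*}
Moreover, $\log x\ge 1-1/x$ with $x=m/\bar g^q\ge1$ (Jensen) gives $\log(m/\bar g^q)\ge 1-\bar g^q/m$. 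Multiplying by $|\Omega|m$ proves the required comparison whenever $1/(q-1)\ge1$, that is, for $1<q\le2$. This settles the lemma in that range with constant exactly $C_P^2$.

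The main obstacle is $q>2$. There the factor $1/(q-1)$ is too small, and in fact the pure comparison above fails. For example, if $g$ is (a smooth approximation of) the indicator of a set of relative measure $a$ close to $1$, the comparison reads $1-a^{q-1}\le\log(1/a)$, which is false for $q>2$. So the gradient must be used more cleverly than through Lemma~\ref{lem.lsi} alone. What I would try first is to split the deficit: use the Jensen inequality \eqref{2.aux} only for the part of $g^q$ where $g$ stays close to $\bar g$, where the second-order expansion $m-\bar g^q\approx\frac{q(q-1)}{2}\bar g^{q-2}\,\mathrm{Var}(g)$ is dominated by $q^2\bar g^{2q-2}\mathrm{Var}(g)$. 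Away from $\bar g$, I would use the Poincar\'e--Wirtinger inequality \eqref{2.pwi} for $f$ directly, in the form $\int_\Omega(f-m)^2\dx\le C_P^2\|\na f\|_{L^2(\Omega)}^2$, together with $m-\bar g^q\le m$. The aim is a bound of the form $m-\bar g^q\le \fint_\Omega(f-m)^2\dx/m+\fint_\Omega f\log(f/m)\dx$ in which the two contributions are not added but maximized, so that the constant stays $C_P^2$. If this cannot be closed with the sharp constant, the fallback is the weaker constant $C_P^2\max\{1,q-1\}$ supplied by the log-convexity argument, which suffices for all later applications in Theorem~\ref{thm.csi}.
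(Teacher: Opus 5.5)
You follow the paper's proof in substance: apply Lemma~\ref{lem.lsi} to $g^q$, then compare the deficit with the Boltzmann entropy of $g^q$ by a convexity argument. The only difference is how the convexity enters. The paper uses the perspective $G(r)=r\log\fint_\Omega g^{q/r}\dx$ and the monotone difference quotient of $e^{G}$ between $r=1$ and $r=q$. You use the tangent line of $\varphi(p)=\log\fint_\Omega g^p\dx$ at $p=q$ together with $\log x\ge 1-1/x$. Since $G'(1)=\varphi(q)-q\varphi'(q)$ and $e^{G(1)}=\fint_\Omega g^q\dx$, both routes give the same intermediate inequality
\begin{align*}
  \frac{1}{q-1}\int_\Omega(g^q-\bar g^q)\dx \le \int_\Omega g^q\log\frac{g^q}{\overline{g^q}}\dx .
\end{align*}
So for $1<q\le 2$ your proof is complete and agrees with the paper's.

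Your obstacle for $q>2$ is real, but it is not a weakness of your method. The paper's proof stops in the same place. Its last display is $\frac{1}{q-1}\big(\fint_\Omega g^q\dx-\bar g^q\big)\le C_P^2(\overline{g^q})^{-1}\fint_\Omega|\na g^q|^2\dx$, which bounds $\int_\Omega\eta_q(g|\bar g)\dx$. The printed statement has lost the factor $1/(q-1)$.

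Without that factor the inequality is false for large $q$, so your splitting strategy cannot be closed with constant $C_P^2$. For instance, on $\Omega=(0,\pi)$, where $C_P=1$, take $g^q=(x-\delta)_+$ with $0<\delta<\pi$. The right-hand side equals $2\pi/(\pi-\delta)$ for every $q$. On the other hand, $|\Omega|\bar g^q=(\pi-\delta)^2\big(\frac{\pi-\delta}{\pi}\big)^{q-1}\big(\frac{q}{q+1}\big)^q\to0$, so the left-hand side tends to $(\pi-\delta)^2/2$. This exceeds $2\pi/(\pi-\delta)$ whenever $(\pi-\delta)^3>4\pi$, e.g.\ for $\delta=1/10$. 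For $d\ge2$ one can say more: take $g^q$ equal to $0$ on a tiny ball and to $1$ outside a small ball around it, with Dirichlet energy tending to zero. This shows that no $q$-independent constant works at all.

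So drop the splitting idea and make your fallback the statement, with constant $\max\{1,q-1\}C_P^2$. This is exactly what the paper proves. It is also the form used in the lemma with $C_3(q,s)=(q-1)C_i(s)$. Where Lemma~\ref{lem.q} is invoked with exponent $s>2$ (in Lemma~\ref{lem.qs1} and in the critical case $s=q(d-2)/(2d)$), the extra factor only changes constants.
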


\begin{proof}
We proceed similarly as in \cite[Sec.~3.2]{CJP16} and define the function
\begin{align*}
  G(r) = r\log\bigg(\fint_\Omega g^{q/r}\dx \bigg)\quad\mbox{for }
  r\ge 1.
\end{align*}
The derivatives equal
\begin{align*}
  G'(r) &= \bigg(\fint_\Omega g^{q/r}\dx \bigg)^{-1}
  \bigg(\fint_\Omega g^{q/r}\dx \log\fint_\Omega g^{q/r}\dx  
  - \frac{q}{r}\fint_\Omega g^{q/r}\log g\dx \bigg), \\
  G''(r) &= \frac{q^2}{r^3}\bigg(\fint_\Omega g^{q/r}\dx \bigg)^{-2}
  \bigg(\fint_\Omega g^{q/r}\dx \fint_\Omega g^{q/r}(\log g)^2\dx 
  - \bigg(\fint_\Omega g^{q/r}\log g \dx \bigg)^2\bigg).
\end{align*}
It follows from the Cauchy--Schwarz inequality that $G''(r)\ge 0$. Thus, $G$ is convex. Next, we define
\begin{align*}
  H(r) = -\frac{e^{G(r)}-e^{G(1)}}{r-1}\quad\mbox{for }r\ge 1.
\end{align*}
Since $r\mapsto \exp G(r)$ is convex, $H$ is nonincreasing. Hence, by Lemma \ref{lem.lsi}, applied to $g^q$,
\begin{align}\label{2.H}
  H(r) &\le \lim_{r\to 1}H(r) = -G'(1)\exp G(1)
  = \fint_\Omega g^q\log\frac{g^q}{\overline{g^q}}\dx  
  \le C_P^2(\overline{g^q})^{-1}\fint_\Omega|\na g^q|^2 \dx ,
\end{align}
which, for $r=q$, is equivalent to 
\begin{align*}
  \frac{e^{G(1)}-e^{G(q)}}{q-1}
  = \frac{1}{q-1}\bigg(\fint_\Omega g^q\dx  
  - \bigg(\fint_\Omega g\dx \bigg)^q\bigg)
  \le C_P^2(\overline{g^q})^{-1}\fint_\Omega|\na g^q|^2 \dx ,
\end{align*}
which completes the proof.
\end{proof}

Lemmas \ref{lem.lsi} and \ref{lem.q} lead to the following result involving an additional parameter $s$. 

\begin{lemma}\label{lem.qs1}
Let $s>1/2$. It holds for all nonnegative functions $g\in L^1(\Omega)$ satisfying $\na g^s\in L^2(\Omega)$ that
\begin{align*}
  \int_\Omega g\log\frac{g}{\bar g}\dx  
  \le C_1(s)\bar{g}^{1-2s}\int_\Omega|\na g^s|^2 \dx,
\end{align*}
where $C_1(s)=3C_P^2/(2s-1)$. 
\end{lemma}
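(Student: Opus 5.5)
The plan is to follow the proof of Lemma \ref{lem.lsi}, but to apply Jensen's inequality to a power of $g/\bar g$ chosen so that $g^s$ appears. Write $\log(g/\bar g)=\frac{1}{2s-1}\log\big((g/\bar g)^{2s-1}\big)$, which is legitimate since $s>1/2$. Apply Jensen's inequality for the concave logarithm with respect to the probability measure $g\,\dx/(|\Omega|\bar g)$:
\begin{align*}
  \int_\Omega g\log\frac{g}{\bar g}\dx
  \le \frac{|\Omega|\bar g}{2s-1}\log X, \quad
  X := \fint_\Omega w^2\dx, \quad w := (g/\bar g)^s .
\end{align*}
By construction $\fint_\Omega w^{1/s}\dx = 1$, and $\fint_\Omega|\na w|^2\dx = \bar g^{-2s}\fint_\Omega|\na g^s|^2\dx =: Z$. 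It then remains to show that $\log X$ is bounded linearly by $Z$, with a constant of order $C_P^2$.

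Next I apply the Poincar\'e--Wirtinger inequality \eqref{2.pwi} to $w$, with mean $m=\fint_\Omega w\dx$, which gives $X\le C_P^2 Z + m^2$. The two cases differ in how $m$ is controlled.
\begin{itemize}
\item If $1/2<s\le 1$, Jensen's (or H\"older's) inequality gives $m\le(\fint_\Omega w^{1/s}\dx)^s=1$. Then Lemma \ref{lem.ineq} with $a_1=C_P^2$ yields $\log X\le C_P^2 Z$.
\item If $s>1$, then $m$ may exceed $1$. I interpolate $L^1$ between $L^{1/s}$ and $L^2$: with $\theta=1/(2s-1)$ one has $1=\theta s+(1-\theta)/2$, so
\begin{align*}
  m\le \|w\|_{L^{1/s}}^{\theta}\|w\|_{L^2}^{1-\theta}
  = X^{(s-1)/(2s-1)} \quad\text{(normalized averages)}.
\end{align*}
Hence $m^2\le X^\beta$ with $\beta=(2s-2)/(2s-1)<1$. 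Young's inequality $X^\beta\le \beta X+(1-\beta)$ then gives $(1-\beta)X\le C_P^2 Z+(1-\beta)$, that is, $X\le 1+(2s-1)C_P^2 Z$. Lemma \ref{lem.ineq} gives $\log X\le (2s-1)C_P^2 Z$.
\end{itemize}
Inserting either bound into the Jensen estimate and using $|\Omega|\bar g\,\bar g^{-2s}\fint_\Omega|\na g^s|^2\dx=\bar g^{1-2s}\int_\Omega|\na g^s|^2\dx$ gives the claim with constant $C_P^2/(2s-1)$ for $s\le1$ and $C_P^2$ for $s>1$. Both constants lie below $3C_P^2/(2s-1)$ when $s\le 2$. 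Regularity is handled as in Lemma \ref{lem.lsi}: the Poincar\'e--Wirtinger inequality together with $\na g^s\in L^2$ and $g^s\in L^{1/s}$ gives $g^s\in H^1(\Omega)$, so every quantity above is finite.

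The main obstacle is the case $s>1$, namely controlling the mean $m$ of $w$ together with the stated form of the constant. My bound $C_P^2$ does not decay like $1/(2s-1)$ for large $s$, so it is weaker than the claim when $s>2$. To obtain exactly $3C_P^2/(2s-1)$, I would first avoid Young's inequality: keep the three terms $1$, a quadratic term in $Z$, and a power $Z^p$ that arise from the interpolation, and feed them into the three-term version of Lemma \ref{lem.ineq} with constant $K_2$. The factor $3$ presumably comes from that count of terms. Failing this, I would reduce to Lemma \ref{lem.lsi} applied to a suitable power of $g$, combined with the monotonicity argument for $G$ used in Lemma \ref{lem.q}. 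I expect this bookkeeping, not the structure of the argument, to be the delicate part.
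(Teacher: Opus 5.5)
For $1/2<s\le 1$ your argument is the paper's proof. For $s>1$ you take a different and more elementary route, and it is correct. The paper controls the mean $m=\fint_\Omega w\dx$ through the Beckner-type Lemma \ref{lem.q} (convexity of $G$ plus Lemma \ref{lem.lsi}). You instead interpolate $L^1$ between $L^{1/s}$ and $L^2$ and absorb $X^\beta$ with Young's inequality, which needs neither of those lemmas. The computation checks out. Absorbing $\beta X$ requires $X<\infty$, which holds because $g^s\in L^1_{\rm loc}$, $\na g^s\in L^2$ and $\pa\Omega$ Lipschitz give $g^s\in H^1(\Omega)$. You get $X\le 1+(2s-1)C_P^2Z$ and the $s$-independent constant $C_P^2$. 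Since $C_P^2\le 3C_P^2/(2s-1)$ exactly when $s\le 2$, you have proved the lemma as stated for $1/2<s\le 2$.

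For $s>2$ a gap remains, and neither of your proposed repairs can close it, because the stated constant is itself the problem.

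\textbf{Where the paper's $3$ comes from.} The paper obtains $3C_P^2/(2s-1)$ by applying Lemma \ref{lem.q} with $q=s$. That lemma is stated without the factor $q-1$ that its proof actually produces: the proof gives $\frac{1}{q-1}\int_\Omega(g^q-\bar g^q)\dx\le C_P^2(\overline{g^q})^{-1}\int_\Omega|\na g^q|^2\dx$. So the stated form is justified only for $q\le 2$. With the factor restored, one gets $m\le 1+(s-1)C_P^2Z$, hence $X\le 1+(2s-1)C_P^2Z+((s-1)C_P^2Z)^2$. Lemma \ref{lem.ineq} with $a_1=2s-1$, $a_2=(s-1)^2$ then gives $K_1=2s-1$, i.e.\ exactly your constant $C_P^2$. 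The paper's $3$ is $a_1=1+2$, which appears only after the factor $s-1$ is dropped.

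\textbf{Why $1/(2s-1)$ decay cannot hold.} No constant of order $1/s$ is possible for large $s$ when $d\ge 2$. Take $d\ge 3$ and $B_1\subset\Omega$. Let $v_\rho$ be the capacitary potential of $B_\rho$ in $B_1$ (zero on $B_\rho$, one outside $B_1$), and put $g=v_\rho^{1/s}$ with $\rho^d=1/s$. Then $\int_\Omega|\na g^s|^2\dx\sim\rho^{d-2}$ and $\int_{B_1\setminus B_\rho}|\log v_\rho|\dx=O(\rho^{d-2})$. Since $0\le 1-v_\rho^{1/s}\le|\log v_\rho|/s$, this gives $|\Omega|(1-\bar g)=|B_\rho|+O(\rho^{d-2}/s)$, so $\bar g^{1-2s}$ stays bounded. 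On the other hand, $g\le 1$ yields $\int_\Omega g\log(g/\bar g)\dx\ge|\Omega|\bar g(1-\bar g)-\frac1s\int_{B_1\setminus B_\rho}|\log v_\rho|\dx\ge c\rho^d$. Hence the best constant is at least of order $\rho^2=s^{-2/d}\gg 1/(2s-1)$. For $d=2$ the logarithmic capacity gives order $\log s/s$ instead.

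So the correct statement for $s>1$ is the one you proved, $C_1(s)=C_P^2\max\{1,1/(2s-1)\}$. Do not try to force the decay $1/(2s-1)$.
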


\begin{proof}
We choose a nonnegative function $g^s\in L^1(\Omega)$ such that $\na g^s\in L^2(\Omega)$. If $s\le 1$ then $g\in L^1(\Omega)$ implies that $g^s\in L^1(\Omega)$. Otherwise, the regularity $g^s\in L^1(\Omega)$ is an additional assumption, and we need to use an approximation argument to extend the result to functions $g\in L^1(\Omega)$. 

We proceed similarly as in the proof of Lemma \ref{lem.lsi}. By Jensen's inequality with probability measure $g(x)\dx /(|\Omega|\bar g)$,
\begin{align}\label{2.glogg}
  \int_\Omega g\log\frac{g}{\bar g}\dx 
  &= \frac{|\Omega|\bar g}{2s-1}\int_\Omega\log
  \bigg(\frac{g}{\bar g}\bigg)^{2s-1}\frac{g\dx }{|\Omega|\bar g}
  \le \frac{|\Omega|\bar g}{2s-1}\log\bigg(\frac{1}{|\Omega|}
  \int_\Omega\bigg(\frac{g}{\bar g}\bigg)^{2s}\dx \bigg) \\
  &= \frac{|\Omega|\bar g}{2s-1}\log\|f\|_{L^2(\Omega)}^2, \nonumber 
\end{align}
where $f = g^s/(\sqrt{|\Omega|}\bar{g}^s)$. The identity $\int_\Omega(f-\bar f)\dx =0$ and the Poincar\'e--Wirtinger inequality lead to
\begin{align}\label{2.aux2}
  \|f\|_{L^2(\Omega)}^2 = \int_\Omega(f-\bar f)^2 \dx 
  + \int_\Omega\bar{f}^2 \dx  
  \le C_P^2\|\na f\|_{L^2(\Omega)}^2 + \int_\Omega\bar{f}^2\dx.
\end{align}
If $s\le 1$, we deduce from Jensen's inequality that $\int_\Omega\bar{f}^2\dx = (\overline{g^s}/\bar{g}^s)^2\le 1$. Consequently, it follows from \eqref{2.aux2} that $\|f\|_{L^2(\Omega)}^2 \le C_P^2\|\na f\|_{L^2(\Omega)}^2 + 1$ and, by Lemma \ref{lem.ineq},
\begin{align*}
  \int_\Omega g\log\frac{g}{\bar g}\dx 
  &\le \frac{|\Omega|\bar g}{2s-1}
  \log\big(1 + C_P^2\|\na f\|_{L^2(\Omega)}^2\big) \\
  &\le \frac{C_P^2|\Omega|\bar g}{2s-1}\|\na f\|_{L^2(\Omega)}^2 
  = \frac{C_P^2}{2s-1}\bar{g}^{1-2s}
  \|\na g^s\|_{L^2(\Omega)}^2.
\end{align*}

Next, let $s>1$. Compared to the proof of Lemma \ref{lem.lsi}, the last term in \eqref{2.aux2} is generally not bounded from above by one and therefore needs to be estimated separately. We apply Lemma \ref{lem.q} with $q$ replaced by $s$ and use $\overline{g^s}\ge\bar{g}^s$ (by Jensen's inequality):
\begin{align*}
  \int_\Omega\bar{f}^2\dx &= \frac{1}{|\Omega|^2\bar{g}^{2s}}
  \bigg(\int_\Omega g^s \dx \bigg)^2
  = \frac{1}{|\Omega|^2\bar{g}^{2s}}\bigg(
  \int_\Omega(g^s-\bar{g}^s)\dx  + |\Omega|\bar{g}^s\bigg)^2 \\
  &\le \frac{1}{|\Omega|^2\bar{g}^{2s}}\bigg(
  C_P^2(\overline{g^s})^{-1}\int_\Omega|\na g^s|^2 \dx 
  + |\Omega|\bar{g}^s\bigg)^2 \\
  &= \big(C_P^2|\Omega|^{-1}\bar{g}^{-s}(\overline{g^s})^{-1}
  \|\na g^s\|_{L^2(\Omega)}^2 + 1\big)^2 
  \le \big(C_P^2|\Omega|^{-1}\bar{g}^{-2s}
  \|\na g^s\|_{L^2(\Omega)}^2 + 1\big)^2 \\
  &= 1 + 2C_P^2|\Omega|^{-1}\bar{g}^{-2s}\|\na g^s\|_{L^2(\Omega)}^2
  + \big(C_P^2|\Omega|^{-1}\bar{g}^{-2s}\|\na g^s\|_{L^2(\Omega)}^2
  \big)^2,
\end{align*}
We infer from \eqref{2.glogg} and Lemma \ref{lem.ineq} with  $z = C_P^2|\Omega|^{-1}\bar{g}^{-2s}\|\na g^s\|_{L^2(\Omega)}^2$, $a_1=3$, and $a_2=1$ that
\begin{align*}
  \int_\Omega g\log\frac{g}{\bar g}\dx 
  \le \frac{|\Omega|\bar g}{2s-1}\log(1 + a_1z + a_2z^2)
  \le 3z = \frac{3C_P}{2s-1}\bar{g}^{1-2s}
  \|\na g^s\|_{L^2(\Omega)}^2.
\end{align*}
This shows the lemma with $C_1(s) = C_P^2/(2s-1)$ if $s\le 1$ and $C_1(s)=3C_P^2/(2s-1)$ if $s>1$.
\end{proof}

A similar result like in Lemma \ref{lem.qs1} holds for $s<1/2$. For this, we recall the Gagliardo--Nirenberg inequality
\begin{align}\label{2.gni}
  \|f\|_{L^p(\Omega)} \le C_{GN}\big(\|\na f\|_{L^2(\Omega)}^\theta
  \|f\|_{L^2(\Omega)}^{1-\theta} + \|f\|_{L^2(\Omega)}\big)
\end{align}
for functions $f\in H^1(\Omega)$ and for some $C_{GN}>0$ depending on $\Omega$ and $p$, where $2<p\le 2d/(d-2)$, $\theta=d(p-2)/(2p)\in(0,1]$ (and $p<\infty$ if $d=2$, $p\le\infty$ if $d=1$). 

\begin{lemma}\label{lem.qs2}
Let $(d-2)/(2d)<s\le 1/2$. Then there exists $C_2(s)>0$, only depending on $|\Omega|$, $d$, $s$, $C_P$, and $C_{GN}$ such that for all nonnegative functions $g\in L^1(\Omega)$ satisfying $\na g^s\in L^2(\Omega)$,
\begin{align*}
  \int_\Omega g\log\frac{g}{\bar g}\dx  
  \le C_2(s)\bar{g}^{1-2s}\int_\Omega|\na g^s|^2 \dx,
\end{align*}
where $C_P>0$ is the constant of the Poincar\'e--Wirtinger inequality \eqref{2.pwi} and $C_{GN}>0$ is the constant of the Gagliardo--Nirenberg inequality \eqref{2.gni}.
\end{lemma}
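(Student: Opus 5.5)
We follow the scheme of Lemma \ref{lem.qs1}, but use a Jensen exponent $p/2$ with $p>2$ admissible in the Gagliardo--Nirenberg inequality \eqref{2.gni}. This replaces the Poincar\'e--Wirtinger step, which only controls the $L^2$ norm of $g^s$ and therefore requires $2s>1$.

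\textbf{Step 1: Jensen's inequality.} Set $p=1/s+\delta$ with a small fixed $\delta>0$, chosen so that $ps-1>0$. The hypothesis $s>(d-2)/(2d)$ gives $1/s<2d/(d-2)$, so $p$ can be chosen with $2<p\le 2d/(d-2)$ (any $p<\infty$ if $d\le2$); note $1/s\ge2$. Let $f=g^s/(\sqrt{|\Omega|}\bar g^s)$ as before. Applying Jensen's inequality with the probability measure $g\,\dx/(|\Omega|\bar g)$ to $\log (g/\bar g)^{ps-1}$ gives
\begin{align*}
  \int_\Omega g\log\frac{g}{\bar g}\dx
  \le \frac{|\Omega|\bar g}{ps-1}\log\bigg(\fint_\Omega\Big(\frac{g}{\bar g}\Big)^{ps}\dx\bigg)
  = \frac{|\Omega|\bar g}{ps-1}\log\big(|\Omega|^{p/2-1}\|f\|_{L^p(\Omega)}^p\big).
\end{align*}

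\textbf{Step 2: the $L^2$ norm of $f$.} Since $s\le1/2$, Jensen's inequality gives $\overline{g^s}\le\bar g^s$, hence $\int_\Omega\bar f^2\dx\le1$. Combined with \eqref{2.pwi} this yields
\begin{align*}
  \|f\|_{L^2(\Omega)}^2\le 1+C_P^2 z, \qquad z:=\|\na f\|_{L^2(\Omega)}^2 .
\end{align*}
In fact $\|f\|_{L^2}$ is not bounded by $1$ a priori, because $2s\le1$. This estimate is exactly what Poincar\'e--Wirtinger supplies, and it is all we need.

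\textbf{Step 3: Gagliardo--Nirenberg.} By \eqref{2.gni},
\begin{align*}
  \|f\|_{L^p(\Omega)}^p\le 2^{p-1}C_{GN}^p\big(z^{p\theta/2}(1+C_P^2z)^{p(1-\theta)/2}+(1+C_P^2z)^{p/2}\big).
\end{align*}
Expanding shows the right-hand side is bounded by $c_0+c_1z+c_2z^{m}$ with $m=p/2>1$, after absorbing intermediate powers $z^\alpha$, $0<\alpha<m$, via $z^\alpha\le z+z^{m}$. The constants $c_i$ depend only on $p,C_P,C_{GN}$.

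\textbf{Step 4 (the main obstacle): the constant term.} Lemma \ref{lem.ineq} requires the constant inside the logarithm to equal $1$ after multiplying by $|\Omega|^{p/2-1}$. The crude bound above does not give this. The fix is to treat $f=\bar f+(f-\bar f)$ and apply \eqref{2.gni} only to $f-\bar f$:
\begin{align*}
  \|f\|_{L^p}\le \bar f|\Omega|^{1/p}+\|f-\bar f\|_{L^p}, \qquad \bar f|\Omega|^{1/p}\le |\Omega|^{1/p-1/2}.
\end{align*}
This is the correct normalization, since it makes $|\Omega|^{p/2-1}(\bar f|\Omega|^{1/p})^p\le1$. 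For the mean-zero part, \eqref{2.gni} together with \eqref{2.pwi} gives $\|f-\bar f\|_{L^p}\le C_{GN}(C_P^{1-\theta}+C_P)z^{1/2}=:Bz^{1/2}$. The elementary inequality $(a+b)^p\le a^p+p\,2^{p-1}(a^{p-1}b+b^p)$ then gives
\begin{align*}
  |\Omega|^{p/2-1}\|f\|_{L^p}^p\le 1+a_1z^{1/2}+a_3z^{p/2}.
\end{align*}
The term $z^{1/2}$ is not linear, so it must be converted into a form Lemma \ref{lem.ineq} accepts, and the remedy is to use $\log(1+y)\le y$ on it directly. Write $w=z^{1/2}$. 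Lemma \ref{lem.ineq}, applied in the variable $w$ with exponent $p$, gives $\log(1+a_1w+a_3w^p)\le K_2w$. This is linear in $w=z^{1/2}$, not in $z$, so it is insufficient for small $z$.

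So Step 4 requires a finer expansion. The quantity to estimate is $\fint(g/\bar g)^{ps}$, and we write $(1+h)^{ps}$ with $h=(f-\bar f)/\bar f$. Taylor expansion around $\bar f$, using $\int(f-\bar f)=0$, kills the linear term. This gives $\fint (f/\bar f)^{p}\le1+c(\fint|h|^2+\fint|h|^p)$, where $f^p\le \bar f^p+p\bar f^{p-1}(f-\bar f)+C(\bar f^{p-2}|f-\bar f|^2+|f-\bar f|^p)$ and $\bar f\le|\Omega|^{-1/2}$ is used to normalize. Then $\fint|h|^2$ is controlled by \eqref{2.pwi} and $\fint|h|^p$ by \eqref{2.gni}. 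The issue is that $\bar f$ could be small, and this is controlled by the lower bound $\bar f\ge$ ? (from $g^s$ concavity no). This step, bounding the quadratic term uniformly when $\overline{g^s}\ll\bar g^s$, is the main obstacle.

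Assuming it succeeds, we obtain $\log(\ldots)\le\log(1+a_1z+a_3z^{p/2})\le K_2z$ by Lemma \ref{lem.ineq}. Multiplying by $|\Omega|\bar g/(ps-1)$ and using $z=|\Omega|^{-1}\bar g^{-2s}\|\na g^s\|_{L^2}^2$ gives the claim with $C_2(s)=K_2/(ps-1)$.
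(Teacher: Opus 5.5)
\textbf{Gap in Step 4.} Your proposal is not a complete proof. The decisive estimate in Step 4 is left open (``$\bar f\ge$ ?'', ``Assuming it succeeds''), and the obstacle you name there does not actually exist. It comes from dividing by $\bar f$. With $h=(f-\bar f)/\bar f$ you are estimating $\fint_\Omega(f/\bar f)^p\dx=\fint_\Omega(1+h)^p\dx$. But the quantity produced by Jensen's inequality is
\[
  \fint_\Omega(g/\bar g)^{ps}\dx=(\overline{g^s}/\bar g^s)^p\fint_\Omega(1+h)^p\dx .
\]
The prefactor $(\sqrt{|\Omega|}\,\bar f)^p$ exactly cancels the negative powers of $\bar f$ hidden in $\fint_\Omega|h|^2\dx$ and $\fint_\Omega|h|^p\dx$. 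If you do not divide, the pointwise Taylor bound you already wrote down integrates (the linear term vanishes since $\int_\Omega(f-\bar f)\dx=0$) to
\begin{align*}
  \fint_\Omega\Big(\frac{g}{\bar g}\Big)^{ps}\dx
  = |\Omega|^{p/2-1}\|f\|_{L^p(\Omega)}^p
  &\le |\Omega|^{p/2}\bar f^{p}
  + 2^{p-3}p(p-1)|\Omega|^{p/2-1}\bar f^{p-2}\|f-\bar f\|_{L^2(\Omega)}^2
  + 2^{p-2}|\Omega|^{p/2-1}\|f-\bar f\|_{L^p(\Omega)}^p \\
  &\le 1 + 2^{p-3}p(p-1)C_P^2\,z + 2^{p-2}|\Omega|^{p/2-1}B^p z^{p/2}.
\end{align*}
Because $p>2$, the coefficient $\bar f^{p-2}$ of the quadratic term is increasing in $\bar f$. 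So you only need the \emph{upper} bound $\bar f\le|\Omega|^{-1/2}$ from your Step 2 (Jensen for the concave map $z\mapsto z^s$), which gives both $|\Omega|^{p/2}\bar f^p\le 1$ and $|\Omega|^{p/2-1}\bar f^{p-2}\le 1$. A small $\bar f$ only helps, and no lower bound is needed.

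\textbf{How the argument closes.} Lemma \ref{lem.ineq} now applies with $a_2=0$ and exponent $p/2>1$, giving $\log(\cdots)\le K_2z$ and your constant $C_2(s)=K_2/(ps-1)$. With this step filled in, your argument is exactly the paper's proof:
\begin{itemize}
\item Jensen's inequality with exponent $ps$, where $p=1/s+\eps$;
\item Taylor expansion of $f^p$ about $\bar f$, so the linear term integrates to zero;
\item \eqref{2.pwi} for the quadratic term;
\item \eqref{2.gni} together with \eqref{2.pwi} for $\|f-\bar f\|_{L^p(\Omega)}^p$;
\item Lemma \ref{lem.ineq}.
\end{itemize}
Step 3 and the triangle-inequality attempt should be dropped. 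As you noticed, they discard the cancellation of the linear term and therefore produce either a constant different from $1$ inside the logarithm or a non-removable $z^{1/2}$ term. A minor point for rigor: since $s\le 1/2$ and $g\in L^1(\Omega)$, you have $g^s\in L^{1/s}(\Omega)\subset L^2(\Omega)$, hence $g^s\in H^1(\Omega)$, so \eqref{2.pwi} and \eqref{2.gni} may indeed be applied.
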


\begin{proof}
The proof is similar to that of Lemma \ref{lem.qs1}, but we need to work in an $L^p(\Omega)$ space for some $p>2$ rather than in $L^2(\Omega)$. To this end, let $\eps>0$ and set $p=1/s+\eps>2$. By Jensen's inequality with probability measure $g(x)\dx /(|\Omega|\bar g)$,
\begin{align*}
  \int_\Omega g\log\frac{g}{\bar g}\dx 
  &= \frac{|\Omega|\bar g}{ps-1}\int_\Omega\log
  \bigg(\frac{g}{\bar g}\bigg)^{ps-1}\frac{g\dx }{|\Omega|\bar g}
  \le \frac{|\Omega|\bar g}{\eps s}\log\bigg(\frac{1}{|\Omega|}
  \int_\Omega\bigg(\frac{g}{\bar g}\bigg)^{ps}\dx \bigg) \\
  &= \frac{|\Omega|\bar g}{\eps s}\log\|f\|_{L^p(\Omega)}^p,
\end{align*}
where $f=g^s/(|\Omega|^{1/p}\bar{g}^s)$. The Gagliardo--Nirenberg inequality as well as $g^s\in L^1(\Omega)$, $\na g^s\in L^2(\Omega)$ imply that $g^s\in L^{2d/(d-2)}(\Omega)$. Then, because of $s>(d-2)/(2d)$, it follows for sufficiently small $\eps>0$ (depending on $d$) that $g\in L^{2ds/(d-2)}(\Omega) \hookrightarrow L^{1+\eps s}(\Omega) = L^{ps}(\Omega)$. 

In contrast to the proof of Lemma \ref{lem.qs1}, we employ a Taylor expansion and the inequality $(a+b)^{p-2}\le 2^{p-2}(a^{p-2}+b^{p-2})$ for $a$, $b\ge 0$:
\begin{align*}
  f^p &= \bar{f}^p + p\bar{f}^{p-1}(f-\bar f)
  + p(p-1)\int_0^1\big(\bar f + \theta(f-\bar{f})\big)^{p-2}
  (1-\theta)\dd\theta(f-\bar f)^2 \\
  &\le \bar{f}^p + p\bar{f}^{p-1}(f-\bar f)
  + 2^{p-2}p(p-1)\int_0^1
  \big(\bar{f}^{p-2} + \theta^{p-2}(f-\bar{f})^{p-2}\big)
  (1-\theta)\dd\theta(f-\bar f)^2 \\
  &= \bar{f}^p + p\bar{f}^{p-1}(f-\bar f)
  + 2^{p-3}p(p-1)\bar{f}^{p-2}(f-\bar f)^2 + 2^{p-2}(f-\bar f)^p.
\end{align*} 
The second term on the right-hand side vanishes after integration over $\Omega$, leading to
\begin{align}\label{2.aux3}
  \|f\|_{L^p(\Omega)}^p \le |\Omega|\bar{f}^p
  + 2^{p-3}p(p-1)\bar{f}^{p-2}\|f-\bar f\|_{L^2(\Omega)}^2 
  + 2^{p-2}\|f-\bar f\|_{L^p(\Omega)}^p.
\end{align}
The second term on the right-hand side can be estimated using the Poincar\'e inequality, while for the third term, we first apply the Gagliardo--Nirenberg inequality with $\theta = d(p-2)/(2p)$, followed by the Poincar\'e--Wirtinger inequality:
\begin{align*}
  \|f-\bar f\|_{L^p(\Omega)}^p 
  &\le C_{GN}^p\|\na f\|_{L^2(\Omega)}^{p\theta}
  \|f-\bar f\|_{L^2(\Omega)}^{p(1-\theta)}
  + C_{GN}^p\|f-\bar f\|_{L^2(\Omega)}^p \\
  &\le C_{GN}^p(C_{P}^{p(1-\theta)}+C_{P}^p)\|\na f\|_{L^2(\Omega)}^p.
\end{align*}
The condition $s>(d-2)/(2d)$ guarantees that $\theta<1$. 

It remains to estimate $|\Omega|\bar{f}^p$ in \eqref{2.aux3}. As in the proof of Lemma \ref{lem.qs1}, Jensen's inequality shows that $|\Omega|\bar{f}^p = (\overline{g^s}/\bar{g}^s)^p\le 1$ (since $s<1$). Hence, we deduce from \eqref{2.aux3} and $\bar{f}^{p-2}\le|\Omega|^{2/p-1}$ that
\begin{align*}
  \|f\|_{L^p(\Omega)}^p 
  &\le 1 + 2^{p-3}p(p-1)C_P^2|\Omega|^{2/p-1}\|\na f\|_{L^2(\Omega)}^2
  + 2^{p-2}C_{GN}^p(C_{P}^{p(1-\theta)}+C_{P}^p)
  \|\na f\|_{L^2(\Omega)}^p \\
  &= 1 + a_1z + a_3z^{p/2},
\end{align*}
where $z = \|\na f\|_{L^2(\Omega)}^2$, $a_1 = 2^{p-3}p(p-1)C_P^2|\Omega|^{2/p-1}$, $a_3 = 2^{p-2}C_{GN}^p(C_{P}^{p(1-\theta)}+C_{P}^p)$. It follows from Lemma \ref{lem.ineq} that
\begin{align*}
  \int_\Omega g\log\frac{g}{\bar g}\dx 
  &\le \frac{|\Omega|\bar g}{\eps s}\log(1 + a_1z + a_3z^{p/2})
  \le \frac{|\Omega|\bar g}{\eps s}K_2 
  \|\na f\|_{L^2(\Omega)}^2 \\
  &= \frac{1}{\eps s}|\Omega|^{1-2/p}K_2\bar{g}^{-2s}
  \|\na g^s\|_{L^2(\Omega)}^2,
\end{align*}
where $K_2 = \max\{a_1+a_3,p/2-1\}$ only depends on $|\Omega|$, $d$, $s$, $C_P$, and $C_{GN}$. Setting $C_2(s)=|\Omega|^{1-2/p}K_2/(\eps s)$ completes the proof.
\end{proof}

\begin{lemma}
	Let $q>1$ and $s>(d-2)/(2d)$. Then there exists $C_3(q,s)>0$, only depending on $|\Omega|$, $d$, $q$, $s$, $C_P$, and $C_{GN}$ such that for all nonnegative functions $g\in L^q(\Omega)$ satisfying $\na g^{qs}\in L^2(\Omega)$,
\begin{align*}
  \int_\Omega(g^q-\bar{g}^q)\dx  \le C_3(q,s)
  \bigg(\fint_\Omega g^q \dx \bigg)^{1-2s}\int_\Omega|\na g^{qs}|^2 \dx ,
\end{align*}
where $C_3(q,s)=(q-1)C_i(s)$ with $i=1$ if $s>1/2$ and $i=2$ if $s\le 1/2$. 
\end{lemma}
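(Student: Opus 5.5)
The plan is to copy the proof of Lemma \ref{lem.q}, with Lemma \ref{lem.lsi} replaced by Lemmas \ref{lem.qs1} and \ref{lem.qs2}.

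\emph{Convexity step.} Define $G(r)=r\log(\fint_\Omega g^{q/r}\dx)$ for $r\ge1$. The proof of Lemma \ref{lem.q} shows that $G$ is convex. Hence $r\mapsto e^{G(r)}$ is convex, and $H(r)=-(e^{G(r)}-e^{G(1)})/(r-1)$ is nonincreasing. Letting $r\to1$ and using $-G'(1)e^{G(1)}=\fint_\Omega g^q\log(g^q/\overline{g^q})\dx$ gives, at $r=q$,
\begin{align*}
  \frac{1}{q-1}\bigg(\fint_\Omega g^q\dx-\bigg(\fint_\Omega g\dx\bigg)^q\bigg)
  \le \fint_\Omega g^q\log\frac{g^q}{\overline{g^q}}\dx.
\end{align*}
Multiplying by $|\Omega|$, the left-hand side becomes $(q-1)^{-1}\int_\Omega(g^q-\bar g^q)\dx$.

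\emph{Entropy step.} Next I apply Lemma \ref{lem.qs1} (if $s>1/2$) or Lemma \ref{lem.qs2} (if $(d-2)/(2d)<s\le1/2$) to the function $\tilde g=g^q$. Then $\tilde g\in L^1(\Omega)$ because $g\in L^q(\Omega)$, $\bar{\tilde g}=\fint_\Omega g^q\dx$, and $\na\tilde g^s=\na g^{qs}\in L^2(\Omega)$ by assumption. This yields
\begin{align*}
  \int_\Omega g^q\log\frac{g^q}{\overline{g^q}}\dx\le C_i(s)\bigg(\fint_\Omega g^q\dx\bigg)^{1-2s}\int_\Omega|\na g^{qs}|^2\dx.
\end{align*}
Combining the two displays gives the claim with $C_3(q,s)=(q-1)C_i(s)$, exactly as stated.

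\emph{Main obstacle.} The only delicate point is justifying the convexity/limit argument for $G$ near $r=1$. This requires $g^q\log g$ to be integrable and $G$ to be differentiable at $r=1$. For $r\in[1,q]$ the exponents $q/r$ lie in $[1,q]$, so differentiation under the integral is controlled by $g\in L^q$ together with the higher integrability of $g^{qs}$ from Sobolev (as in Lemma \ref{lem.qs2}).

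If necessary, I would first prove the estimate for $g$ bounded away from $0$ and $\infty$, for instance $g_\delta=\min\{\max\{g,\delta\},1/\delta\}$. Truncation does not increase $|\na g^{qs}|$. I would then pass to the limit $\delta\to0$ by monotone and dominated convergence. A second point to check is that Lemma \ref{lem.qs1} for $s>1$ needs $\tilde g^s=g^{qs}\in L^1(\Omega)$, which is handled by the same truncation argument.
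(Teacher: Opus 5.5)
Your proposal is correct and follows the paper's proof: the paper also takes inequality \eqref{2.H} from the proof of Lemma \ref{lem.q} at $r=q$, then applies Lemma \ref{lem.qs1} or Lemma \ref{lem.qs2} to $g^q$, which gives $C_3(q,s)=(q-1)C_i(s)$. Your truncation argument for the limit $r\to1$ and for the integrability of $g^{qs}$ is not in the paper, which does not address these regularity points, but it is a sound supplement.
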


This lemma generalizes Lemma \ref{lem.q} from $s=1$ to more general values of $s$. It proves Theorem \ref{thm.csi} for $q>1$, $s'>q(d-2)/(2d)$ after setting $s=s'/q$. 

\begin{proof}
We have shown in the proof of Lemma \ref{lem.q} (see \eqref{2.H}) that
\begin{align*}
  \frac{1}{q-1}\fint_\Omega(g^q-\bar{g}^q)\dx  
  = \frac{1}{q-1}\bigg(\fint_\Omega g^q\dx  
  - \bigg(\fint_\Omega g\dx \bigg)^q\bigg)
  \le \fint_\Omega g^q\log\frac{g^q}{\overline{g^q}}\dx .
\end{align*}
Then we deduce from Lemmas \ref{lem.qs1} or \ref{lem.qs2} with $g$ replaced by $g^q$ that 
\begin{align*}
  \int_\Omega(g^q-\bar{g}^q)\dx  
  \le (q-1)C_i(s)(\overline{g^q})^{1-2s}\int_\Omega|\na g^{qs}|^2 \dx ,
\end{align*}
where $i=1$ if $s>1/2$ and $i=2$ of $s\le 1/2$. 
\end{proof}

We can also treat the case  $s=q(d-2)/(2d)$, at least if $q>1$.

\begin{lemma}
	Let $q>1$ and $s=q(d-2)/(2d)$. Then, for all nonnegative functions $g\in L^q(\Omega)$ satisfying $\na g^{s}\in L^2(\Omega)$,
\begin{align*}
  \int_\Omega(g^q-\bar{g}^q)\dx  
  \le C_4(q,s) \|g\|_{L^q(\Omega)}^{q-2s}\int_\Omega|\na g^{s}|^2 \dx,
\end{align*}
where $C_4(q,s)>0$ depends on the constant of the Sobolev embedding $H^1(\Omega)\hookrightarrow L^{2d/(d-2)}(\Omega)$ as well as on $q$, $s$, and $C_P$. 
\end{lemma}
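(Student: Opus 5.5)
Set $u=g^s$ and $p=q/s=2d/(d-2)=2^*$, the critical Sobolev exponent. Then $\int_\Omega g^q\dx=\|u\|_{L^{2^*}}^{2^*}$ and $\|g\|_{L^q}^{q-2s}=\|u\|_{L^{2^*}}^{2^*-2}$. In these variables the claim is a bound on the H\"older deficit:
\begin{align*}
  \int_\Omega(g^q-\bar g^q)\dx \le C\,\|u\|_{L^{2^*}}^{2^*-2}\|\na u\|_{L^2}^2 .
\end{align*}
Since $\bar g^q|\Omega|\ge 0$ and $q>1$, we have $\bar g^q\ge 0$. Because $q/s=2^*>1$, Jensen gives $\bar g^s\le\overline{g^s}$, so that $\bar g^q\ge (\overline{g^s})^{q/s}=\bar u^{2^*}$ whenever $s\le 1$.

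I would therefore aim to prove the stronger estimate
\begin{align*}
  \int_\Omega u^{2^*}\dx-|\Omega|\bar u^{2^*}\le C\|u\|_{L^{2^*}}^{2^*-2}\|\na u\|_{L^2}^2 .
\end{align*}
If $s>1$, I would instead use $\bar g^q\ge |\Omega|^{-q}\|g\|_{L^1}^q$ directly. There the deficit has to be compared with $\int g^q-|\Omega|^{1-q}(\int g)^q$, and I expect the same argument goes through with $u$ replaced appropriately. I will focus on the generic case. The plan follows the Taylor argument of Lemma~\ref{lem.qs2}, now with $p=2^*$ and $\theta=1$.

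First step. Write $u=\bar u+w$ with $\bar w=0$. The elementary convexity inequality
\begin{align*}
  (a+b)^p\le a^p+pa^{p-1}b+C_p\big(a^{p-2}b^2+|b|^p\big),
\end{align*}
valid for $a\ge0$ and $a+b\ge0$, integrates (the linear term vanishes) to
\begin{align*}
  \|u\|_p^p-|\Omega|\bar u^p\le C_p\big(\bar u^{p-2}\|w\|_2^2+\|w\|_p^p\big).
\end{align*}
For the quadratic term I use the Poincar\'e--Wirtinger inequality \eqref{2.pwi} together with $\bar u\le|\Omega|^{-1/p}\|u\|_p$ (H\"older). This gives $\bar u^{p-2}\|w\|_2^2\le C\|u\|_p^{p-2}\|\na u\|_2^2$, which is already of the required form.

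Second step, the main obstacle: the term $\|w\|_p^p$ at the critical exponent. Sobolev--Poincar\'e gives $\|w\|_p\le C_S\|\na u\|_2$, but $\|\na u\|_2^p$ is not of the form $\|u\|_p^{p-2}\|\na u\|_2^2$. Since $\theta=1$, there is no spare factor of $\|w\|_2$, and we cannot argue as in Lemma~\ref{lem.qs2}. My idea is not to fully Sobolev-estimate $\|w\|_p^p$, but to split it as
\begin{align*}
  \|w\|_p^p=\|w\|_p^{p-2}\|w\|_p^2\le (2\|u\|_p)^{p-2}\,C_S^2\|\na u\|_2^2 ,
\end{align*}
using $\|w\|_p\le\|u\|_p+|\Omega|^{1/p}\bar u\le 2\|u\|_p$. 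This yields the claim with $C_4=C_p\big(C_P^2|\Omega|^{2/p-1}+2^{p-2}C_S^2\big)$.

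This also explains the dependence on the Sobolev embedding constant rather than on $C_{GN}$. The reason it works only at the critical line with the prefactor $\|g\|_{L^q}^{q-2s}$ is exactly this homogeneity: both sides scale the same way under $u\mapsto\lambda u$. The remaining technical points are the following.
\begin{itemize}
\item[(i)] Justifying $u\in H^1$, so that $\bar u$ is finite: $g\in L^q$ gives $u\in L^{2^*}\subset L^1$.
\item[(ii)] The Jensen comparison of $\bar g^q$ with $\bar u^{p}$ when $s>1$, where the direction reverses. In that case I would apply the same Taylor argument with base point $\bar g^s$ in place of $\bar u$. The linear term then becomes $p\bar g^{s(p-1)}(\overline{u}-\bar g^s)|\Omega|\le 0$, since $\overline{g^s}\le$ ... this needs checking, and if it fails I would restrict to the case $s\le1$, i.e.\ $q\le 2d/(d-2)$, and handle the rest by the same splitting with $w=u-\bar g^s$.
\end{itemize}
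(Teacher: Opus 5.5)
For $s\le 1$ your argument is the paper's own. The paper bounds its term $I_1$ exactly as you do: Taylor expansion of $z\mapsto z^{q/s}$ about $\overline{g^s}$, Poincar\'e--Wirtinger for the quadratic term, and the splitting $\|w\|_{L^p}^p=\|w\|_{L^p}^{p-2}\|w\|_{L^p}^2$ with the Sobolev embedding for the critical term. Your inequality $\bar g^q\ge(\overline{g^s})^{q/s}$ is its observation $I_2\le 0$. One slip: for $s\le 1$ this inequality comes from concavity of $z\mapsto z^s$, i.e.\ $\overline{g^s}\le\bar g^s$. The inequality $\bar g^s\le\overline{g^s}$ you wrote is the $s\ge 1$ direction and would give the opposite conclusion.

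The genuine gap is the case $s>1$, i.e.\ $q>2d/(d-2)$. It is part of the statement, and you leave it open. With $u=g^s$ and $p=q/s$,
\begin{align*}
  \int_\Omega(g^q-\bar g^q)\dx
  = \Big(\int_\Omega u^p\dx-|\Omega|\bar u^p\Big)
  + |\Omega|\big(\bar u^p-(\bar g^s)^p\big).
\end{align*}
For $s>1$, Jensen gives $\bar u=\overline{g^s}\ge\bar g^s$, so the second term is nonnegative and must be estimated. For the same reason your hope in (ii) fails: the linear term $p\bar g^{s(p-1)}|\Omega|(\overline{g^s}-\bar g^s)$ is $\ge 0$, not $\le 0$. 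Re-centering at $\bar g^s$ does not help either, because $u-\bar g^s$ no longer has zero mean, so Poincar\'e does not control its $L^2$ norm. In every variant you are left with the gap $\overline{g^s}-\bar g^s$ between two different means, which neither Poincar\'e nor Sobolev bounds.

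The missing ingredient is Lemma~\ref{lem.q} with $q$ replaced by $s$, which applies since $g\in L^q(\Omega)\subset L^s(\Omega)$:
\begin{align*}
  \overline{g^s}-\bar g^s\le C_P^2|\Omega|^{-1}(\overline{g^s})^{-1}\|\na g^s\|_{L^2(\Omega)}^2 .
\end{align*}
Combine this with $\bar u^p-(\bar g^s)^p\le p\bar u^{p-1}(\bar u-\bar g^s)$ and $\bar u\le|\Omega|^{-1/p}\|u\|_{L^p(\Omega)}$ (using $p>2$). This yields $|\Omega|(\bar u^p-\bar g^q)\le pC_P^2|\Omega|^{2/p-1}\|u\|_{L^p(\Omega)}^{p-2}\|\na u\|_{L^2(\Omega)}^2$, which is how the paper handles $I_2$.

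An elementary substitute for Lemma~\ref{lem.q} also works here. Use the global bound $t^{1/s}\ge 1+(t-1)/s-2(t-1)^2$ for $t\ge 0$, with $t=u/\bar u$, followed by Bernoulli's inequality. This gives $\bar u-\bar g^s\le 2s\bar u^{-1}\fint_\Omega(u-\bar u)^2\dx$, and Poincar\'e finishes the estimate.

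Your fallback of restricting to $s\le 1$ would prove the lemma only for $q\le 2d/(d-2)$.
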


\begin{proof}
By Taylor expansion for $f(z)=z^{q/s}$,
\begin{align*}
  \int_\Omega&(g^q-\bar{g}^q)\dx 
  = \int_\Omega(f(g^s) - f(\overline{g^s}))\dx 
  - \int_\Omega(f(\bar{g}^s) - f(\overline{g^s}))\dx  \\
  &= \int_\Omega f'(\overline{g^s})(g^s-\overline{g^s})\dx 
  + \int_\Omega\int_0^1 
  f''\big(\overline{g^s}+\theta(g^s-\overline{g^s})\big)
  (g^s-\overline{g^s})^2(1-\theta)\dd\theta \dx  \nonumber \\
  &\phantom{xx}- \int_\Omega\int_0^1 f'\big(\overline{g^s}
  + \theta(\bar{g}^s- \overline{g^s})\big)(\bar{g}^s-\overline{g^s})
  \dd\theta \dx. \nonumber 
\end{align*}
The first integral on the right-hand side vanishes, leading to
\begin{align*}
  \int_\Omega(g^q-\bar{g}^q)\dx 
  &= \frac{q}{s}\bigg(\frac{q}{s}-1\bigg)\int_\Omega\int_0^1
  \big(\overline{g^s} + \theta(g^s-\overline{g^s})\big)^{q/s-2}
  (g^s-\overline{g^s})^2(1-\theta)\dd\theta \dx  \\
  &\phantom{xx}- \frac{q}{s}\int_\Omega\int_0^1
  \big(\overline{g^s} + \theta(\bar{g}^s -\overline{g^s})\big)^{q/s-1}
  (\bar{g}^s-\overline{g^s})\dd\theta \dx =: I_1 + I_2.
\end{align*}
The term $I_1$ is estimated as
\begin{align*}
  I_1 &\le 2^{q/s-2}\frac{q}{s}\bigg(\frac{q}{s}-1\bigg)
  \int_\Omega\int_0^1\big((\overline{g^s})^{q/s-2}
  (g^s-\overline{g^s})^2 
  + \theta^{q/s-2}(g^s-\overline{g^s})^{q/s}\big)(1-\theta)d\theta\dx \\
  &= 2^{q/s-3}\frac{q}{s}\bigg(\frac{q}{s}-1\bigg)
  (\overline{g^s})^{q/s-2}\int_\Omega(g^s-\overline{g^s})^2\dx 
  + 2^{q/s-2}\int_\Omega(g^s-\overline{g^s})^{q/s}\dx.
\end{align*}
Observing that $q/s = 2d/(d-2)>2$ and using the Sobolev embedding $H^1(\Omega)\hookrightarrow L^{2d/(d-2)}(\Omega) = L^{q/s}(\Omega)$ with constant $C_S>0$, we can write
\begin{align*}
  \int_\Omega(g^s-\overline{g^s})^{q/s}\dx
  &= \|g^s-\overline{g^s}\|_{L^{q/s}(\Omega)}^2
  \|g^s-\overline{g^s}\|_{L^{q/s}(\Omega)}^{q/s-2} \\
  &\le C C_S\|g\|_{L^q(\Omega)}^{q-2s}\|\na g^s\|_{L^2(\Omega)}^2,
\end{align*}
and the constant $C$ depends only on $q$, $s$, and $\Omega$. Together with the Poincar\'e--Wirtinger inequality, this gives
\begin{align*}
  I_1 \le C( q,s,C_P, C_S)\|g\|_{L^q(\Omega)}^{q-2s}
  \|\na g^s\|_{L^2(\Omega)}^2.
\end{align*}

We turn to the second term $I_2$. If $s\le 1$, the concavity of $z\mapsto z^s$ yields $I_2\le 0$. If $s>1$, we can use Lemma \ref{lem.q} to estimate 
\begin{align*} 
  I_2&\le C(s,q)\bigg(\int_\Omega g^s \dx\bigg)^{q/s- 1}C_p^2 \bigg(\fint_\Omega g^s\dx\bigg)^{-1}\int_\Omega|\na g^s|^2 \dx \\
  & \le C(s,q) \|g\|_{L^q(\Omega)}^{q-2s} 
  \|\na g^s\|_{L^2(\Omega)}^2.
\end{align*} 
This shows that 
\begin{align*}
  \int_\Omega(g^q-\bar{g}^q)\dx 
  &\le C(q, s, C_S, C_P)\|g\|_{L^q(\Omega)}^{q-2s}
  \|\na g^s\|_{L^2(\Omega)}^2. 
\end{align*}
finishing the proof.
\end{proof}

\begin{remark}[Optimality of the lower bound]\label{rem.opt}\rm
We claim that the lower bound $s\ge q(d-2)/(2d)$ is optimal. Indeed, assume that $s<q(d-2)/(2d)$. Let (without loss of generality) $B_1\subset\Omega$ be a unit ball around the origin and $n=1$, let $r<1$, and set $g_r(x)=(1-|x|/r)_+^{1/s}$ for $x\in B_1$. A computation shows that the left-hand side of the convex Sobolev inequality in Theorem \ref{thm.csi} behaves like $\mbox{LHS}\sim r^d$ as $r\to 0$, while the right-hand side behaves like $\mbox{RHS}\sim r^{d(q-2s)/q+d-2}=r^{2(d-ds/q-1)}$. Therefore, the quotient behaves like $\mbox{RHS}/\mbox{LHS}\sim r^{d-2-2ds/q}\to 0$ as $r\to 0$, since $s<q(d-2)/(2d)$. Thus, no positive constant can exist in this case.

Another justification of the optimality comes from the critical exponent of the fast-diffusion equation $\pa_t u=\diver(u^{m-1}\na u)$, giving $A(u)=u^{m-1}$. It is well known that the mass-conservative fast-diffusion regime is $m > m_c:=(d-2)/d$ (the case $m<(d-2)/d$ corresponds to the extinction regime; see \cite{BeFi24}). Then $h_1''(u)A(u)=u^{m-2}$. Hypothesis (H3) is satisfied for $s=m/2$, and the bound $s>(d-2)/(2d)$ is equivalent to $m>(d-2)/d$, which yields exactly the critical exponent $m_c$.
\end{remark}


\section{Variant of the Csisz\'ar--Kullback inequality}\label{sec.cki}

We prove the following result, which extends the Csisz\'ar--Kullback inequality to the case $q\ge 2$.

\begin{proposition}\label{prop.cki}
Let $h$ be a smooth convex function on $\dom$ such that there exists $c_h>0$ with $z^Th''(u)z\ge c_0\sum_{i=1}^n u_i^{q-2}z_i^2$ for all $u\in\dom$ and $z\in\R^n$. Then for all suitable functions $u$ and $v$,
\begin{align*}
  \int_\Omega h(u|v)\dx 
  \ge \begin{cases}\displaystyle
  \frac{1}{2\bar v}\|u-v\|_{L^1(\Omega)}^2 &\quad\mbox{if }q=1, 
  \\[8pt]
  \displaystyle
  \frac{c_0}{2}\max\{\|u\|_{L^q(\Omega)},\|v\|_{L^q(\Omega)}\}^{q-2}
  \|u-v\|_{L^q(\Omega)}^2 &\quad\mbox{if }1<q<2, \\
  \displaystyle
  \frac{c_0}{q}\sum_{i=1}^n\int_\Omega v_i^{q-2}(u_i-v_i)^2 \dx 
  &\quad\mbox{if }q\ge 2,
  \end{cases}
\end{align*}
where $h(u|v)$ is defined in \eqref{1.hrel}. 
\end{proposition}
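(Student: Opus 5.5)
The starting point is Taylor's formula with integral remainder. Since $h(u|v)=h(u)-h(v)-h'(v)\cdot(u-v)$, we have pointwise
\begin{align*}
  h(u|v) = \int_0^1(1-\theta)\,(u-v)^T h''\big(v+\theta(u-v)\big)(u-v)\,\dd\theta
  \ge c_0\sum_{i=1}^n\int_0^1(1-\theta)\,w_{i,\theta}^{q-2}(u_i-v_i)^2\,\dd\theta,
\end{align*}
where $w_{i,\theta}=v_i+\theta(u_i-v_i)$. The three cases then come from estimating $w_{i,\theta}^{q-2}$ from below in different ways, depending on the sign of $q-2$. (For $q=1$ the constant $c_0$ is absent in the claimed bound. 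I would read this as $c_0=1$, which holds for $h_1$ since $h_1''=\operatorname{diag}(1/u_i)$, and treat the case componentwise.)

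\emph{Case $q\ge 2$.} Here $z\mapsto z^{q-2}$ is nondecreasing and $w_{i,\theta}\ge(1-\theta)v_i$, so $w_{i,\theta}^{q-2}\ge(1-\theta)^{q-2}v_i^{q-2}$. Integrating $(1-\theta)^{q-1}$ over $[0,1]$ gives $1/q$, which yields exactly $\frac{c_0}{q}\sum_i\int_\Omega v_i^{q-2}(u_i-v_i)^2\dx$.

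\emph{Case $1\le q<2$.} Now $q-2<0$, and we write $(u_i-v_i)^2=\big(w^{(q-2)/2}|u_i-v_i|\big)^2\,w^{(2-q)/2}\cdot w^{(2-q)/2}$ and use Hölder's inequality in $x$. Concretely, Hölder with exponents $2/q$ and $2/(2-q)$ gives
\begin{align*}
  \int_\Omega |u_i-v_i|^q\dx = \int_\Omega \big(w^{q-2}(u_i-v_i)^2\big)^{q/2} w^{q(2-q)/2}\dx
  \le \Big(\int_\Omega w^{q-2}(u_i-v_i)^2\dx\Big)^{q/2}\Big(\int_\Omega w^q\dx\Big)^{(2-q)/2}.
\end{align*}
By convexity of the norm, $\|w_{i,\theta}\|_{L^q}\le\max\{\|u_i\|_{L^q},\|v_i\|_{L^q}\}$. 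Hence
\begin{align*}
  \int_\Omega w_{i,\theta}^{q-2}(u_i-v_i)^2\dx \ge \max\{\|u\|_{L^q},\|v\|_{L^q}\}^{q-2}\,\|u_i-v_i\|_{L^q}^2
\end{align*}
uniformly in $\theta$, using $q-2<0$ to flip the max. Integrating $(1-\theta)$ over $[0,1]$ gives the factor $1/2$. Summing over $i$ (and comparing $\sum_i\|u_i-v_i\|_{L^q}^2$ with $\|u-v\|_{L^q}^2$ up to the norm convention on $\R^n$) gives the $1<q<2$ bound.

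For $q=1$ the same Hölder argument with $q=1$ gives $\|u_i-v_i\|_{L^1}^2\le\int_\Omega w^{-1}(u_i-v_i)^2\dx\cdot\int_\Omega w\dx$. For $v$ constant (the setting of the paper, $v=\bar u$ with $\bar u$ the mean of $u$), we have $\int_\Omega w_{i,\theta}\dx=|\Omega|\bar v_i$ independent of $\theta$, which produces the classical Csiszár--Kullback factor $1/(2\bar v)$ (with $|\Omega|$ absorbed by normalization). Alternatively, one could quote the classical inequality \cite{Csi67,Kul67} directly.

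The main obstacle I expect is the interchange of the $\theta$-integral with the Hölder step, since Hölder must be applied for each fixed $\theta$ before integrating in $\theta$. One also has to justify the pointwise Taylor formula when $u_i$ may vanish; I would handle this by approximating with $u+\eps$ and passing to the limit by monotone convergence.
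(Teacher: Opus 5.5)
Your proposal is correct. For $q\ge 2$ it is the paper's proof: the integral Taylor remainder, the bound $v_i+\theta(u_i-v_i)\ge(1-\theta)v_i$, monotonicity of $z^{q-2}$, and $\int_0^1(1-\theta)^{q-1}\,\dd\theta=1/q$.

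For $1\le q<2$ the paper gives no argument; it only cites \cite{Jue16} for $q=1$ and \cite{CCD02} for $1<q<2$. You instead give a self-contained proof. At each fixed $\theta$ you apply H\"older with exponents $2/q$ and $2/(2-q)$, then Minkowski, $\|w_{i,\theta}\|_{L^q}\le\max\{\|u_i\|_{L^q},\|v_i\|_{L^q}\}$, and use $q-2<0$.

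What this buys is a unified treatment in which $q=1$ is literally the endpoint of the same computation. You do not even need $v$ constant there, since always $\int_\Omega w_{i,\theta}\dx\le\max\{\|u_i\|_{L^1},\|v_i\|_{L^1}\}$. For equal masses $\int_\Omega w_{i,\theta}\dx=\int_\Omega v_i\dx$, which gives the sharp Csisz\'ar--Kullback constant.

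Two minor points:
\begin{itemize}
\item The obstacle you anticipate is not one. All integrands are nonnegative, so Tonelli lets you integrate in $x$ first and apply H\"older at fixed $\theta$.
\item For $n>1$ you can avoid any loss when passing from $\sum_i\|u_i-v_i\|_{L^q}^2$ to $\|u-v\|_{L^q}^2$. Apply H\"older on $\{1,\ldots,n\}\times\Omega$ (counting measure times Lebesgue measure); this gives the bound exactly for the norm $\|u\|_{L^q}^q=\sum_i\|u_i\|_{L^q}^q$.
\end{itemize}

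One correction to your $q=1$ discussion: the factor $|\Omega|$ in your bound $\frac{1}{2|\Omega|\bar v_i}\|u_i-v_i\|_{L^1}^2$ cannot be ``absorbed by normalization''. With $\bar v=\fint_\Omega v\dx$ as in the paper, the displayed inequality with $1/(2\bar v)$ is false when $|\Omega|>1$. Take $\Omega=(0,L)$, $n=1$, $h=h_1$, $v\equiv 1$ and $u=2\cdot\mathbf{1}_{(0,L/2)}$. Then $\int_\Omega h_1(u|v)\dx=L\log 2$ and $\|u-v\|_{L^1}=L$, so the claimed bound fails for $L>2\log 2$. What you actually prove, with $\int_\Omega v_i\dx=|\Omega|\bar v_i$ in the denominator, is the correct statement, and you should present it that way.
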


\begin{proof}
The proof for $q=1$ is given, e.g., in \cite[Theorem A.2]{Jue16}, while the case $1<q<2$ was established in \cite{CCD02}. The case $q\ge 2$ follows from a Taylor expansion. Indeed, let $\phi(\theta) = h(v+\theta(u-v))$ for $0\le \theta\le 1$. Then
\begin{align*}
  h(u|v) &= \phi(1) - \phi(0) - \phi'(0)
  = \int_0^1\phi''(\theta)(1-\theta)\dd\theta \\
  &= \int_0^t\sum_{i,j=1}^n\frac{\pa^2 h}{\pa u_i\pa u_j}
  (v+\theta(u-v))(u_i-v_i)(u_j-v_j)(1-\theta)\dd\theta \\
  &\ge c_0\sum_{i=1}^n\int_0^1(v_i+\theta(u_i-v_i))^{q-2}
  (u_i-v_i)^2(1-\theta)\dd\theta \\
  &\ge c_0\sum_{i=1}^n\int_0^1(v_i-\theta v_i)^{q-2}(u_i-v_i)^2
  (1-\theta)\dd\theta \\
  &= c_0\sum_{i=1}^n v_i^{q-2}(u_i-v_i)^2
  \int_0^1(1-\theta)^{q-1}\dd\theta 
  = \frac{c_0}{q}\sum_{i=1}^n v_i^{q-2}(u_i-v_i)^2,
\end{align*}
completing the proof.
\end{proof}


\section{Proof of Theorem \ref{thm.time}}\label{sec.time}

Entropy inequality \eqref{1.ei} and Hypothesis (H3) show that
\begin{align*}
  \frac{\dd}{\dd t}\int_\Omega h_q(u|\bar u)\dx 
  &= \frac{\dd}{\dd t}\int_\Omega h_q(u)\dx  
  \le -\int_\Omega \na u^T:h_q''(u)A(u)\na u \dx  \\
  &\le -\frac{c_A}{s^2}\sum_{i=1}^n\int_\Omega|\na u_i^s|^2 \dx .
\end{align*}
We infer that $\int_\Omega h_q(u(t))\dx \le \int_\Omega h_q(u^0)\dx $ and consequently,
\begin{align}\label{4.uq}
  \sum_{i=1}^n\int_\Omega u_i^q(t)\dx 
  \le \sum_{i=1}^n\big(\|u_i^0\|_{L^1(\Omega)}
  + \|u_i^0\|_{L^q(\Omega)}^q\big) \le C(\|u^0\|_{L^q(\Omega)}),
\end{align}
as the $L^1(\Omega)$ norm of $u_i(t)$ equals the $L^1(\Omega)$ norm of $u_i^0$. It follows from Theorem \ref{thm.csi} that
\begin{align*}
  \sum_{i=1}^n\int_\Omega |\na u_i^s|^2 \dx 
  &\ge C(q,s,\|u\|_{L^q(\Omega)})\int_\Omega h_q(u|\bar u)\dx.
\end{align*}
We estimate the $L^q(\Omega)$ norm of $u$. If $q=1$, this norm equals the $L^1(\Omega)$ norm of $u_i^0$. Let $q>1$. Then interpolation yields 
$\bar{u}_i\le C\|u_i\|_{L^q(\Omega)}$, which provides a lower bound, since $\bar{u}_i$ equals the $L^1(\Omega)$ norm of $u_i^0$. The upper bound follows from \eqref{4.uq}. Thus, the constant $C(q,s,\|u\|_{L^q(\Omega)})$ depends only on the $L^q(\Omega)$ norm of $u^0$. This shows, for some constant $C>0$, that 
\begin{align*}
  \frac{\dd}{\dd t}\int_\Omega h_q(u|\bar u)\dx 
  + C(\|u^0\|_{L^q(\Omega)})
  \int_\Omega h_q(u|\bar u)\dx  \le 0.
\end{align*}
Then Gronwall's inequality gives
\begin{align*}
  \int_\Omega h_q(u(t)|\bar u)\dx 
  \le e^{-\lambda t}\int_\Omega h_q(u^0|\bar u)\dx ,
\end{align*}
where $\lambda=C(\|u^0\|_{L^q(\Omega)})$. The convergence in the $L^r(\Omega)$ norm follows from the Csisz\'ar--Kullback inequality in Proposition \ref{prop.cki}. 


\section{Examples}\label{sec.exam}

We present two examples for which exponential equilibration was not known yet.

\subsection{Superlinear SKT model}

The dynamics of segregating population species is governed by equations \eqref{1.eq} with the diffusion coefficients
\begin{align*}
  A_{ij}(u) = \delta_{ij}p_i(u) + u_i\frac{\pa p_i}{\pa u_j}(u),
  \quad p_i(u) = \sum_{k=1}^n a_{ik}u_k^s,
\end{align*}
where $i,j=1,\ldots,n$, $s>\max\{0,1-2/d\}$, $d\ge 3$ (to simplify), $(a_{ij})$ is symmetric, positive definite, and
\begin{align*}
  \mu = \min_{i=1,\ldots,n}\bigg(a_{ii}-\frac{s-1}{s+1}
  \sum_{j=1,\,j\neq i}^n a_{ij}\bigg) > 0.
\end{align*}
The existence of a global nonnegative weak solution $u$ to \eqref{1.eq}--\eqref{1.bic} satisfying \eqref{1.regul} was shown in \cite[Theorem 2]{CDJ18}. Using the techniques of \cite{HeJu26}, we can even verify the entropy inequality
\begin{align*}
  \frac{\dd}{\dd t}\int_\Omega h_s(u)\dx + C(\mu,s)
  \sum_{i=1}^n\int_\Omega |\na u_i^s|^2 \dx  \le 0,
\end{align*}
where $C(\mu,s)>0$ can be computed explicitly. We infer from Theorem \ref{thm.time} for $q=s$ the existence of $\lambda>0$ and $C>0$ such that
\begin{align*}
  \|u(t)-\bar u\|_{L^r(\Omega)} \le Ce^{-\lambda t}, \quad t>0,\  
  r = \min\{2,s\}.
\end{align*}

\subsection{Nonlocal Busenberg--Travis model}

The generalized Busenberg--Travis equations with Brinkman-type law, 
\begin{align*}
  \pa_t u_i - \Delta u_i + \diver(u_iv_i) = 0, \quad 
  -\Delta v_i + v_i = -\na p_i(u), \quad p_i(u) = \sum_{k=1}^n a_{ik}u_k^q,
\end{align*}
do not exactly fit into the framework \eqref{1.eq}, but our method can still be applied. The existence of a global weak solution $u$ for some range of values $q>0$ was proved in \cite{HiJu26}. The entropy inequality is given by
\begin{align*}
  \frac{\dd}{\dd t}\int_\Omega h_q(u)\dx  + \frac{4}{q}
  \sum_{i=1}^n\int_\Omega |\na u_i^{q/2}|^2 \dx  \le 0.
\end{align*}
Thus, Theorem \ref{thm.time} with $q\ge 1$, $s=q/2$ shows the existence of a number $\lambda>0$ such that
\begin{align*}
  \|u(t)-\bar u\|_{L^r(\Omega)} \le Ce^{-\lambda t}, \quad t>0,\
  r=\min\{2,q\}.
\end{align*} 


\end{document}